\theoremstyle{plain}
\newtheorem{theorem}{Theorem}[section]
\newtheorem{lemma}[theorem]{Lemma}
\newtheorem{corollary}[theorem]{Corollary}
\newtheorem{proposition}[theorem]{Proposition}
\theoremstyle{definition}
\newtheorem{definition}[theorem]{Definition}
\newtheorem{example}[theorem]{Example}
\newtheorem{question}[theorem]{Question}
\theoremstyle{remark}
\newtheorem{remark}[theorem]{Remark}
\newtheorem{assumption}[theorem]{Assumption}
\newcommand{\RR}{\mathbb{R}}
\newcommand{\CC}{\mathbb{C}}
\newcommand{\FF}{\mathbb{F}}
\DeclareMathOperator{\im}{im}
\DeclareMathOperator{\Hom}{Hom}
\DeclareMathOperator*{\Ass}{Ass}
\DeclareMathOperator{\Ann}{Ann}
\DeclareMathOperator{\Fitt}{Fitt}
\DeclareMathOperator{\depth}{depth}
\newcommand{\loc}[1]{\left(#1\right)_{(0)}}
\newcommand{\R}{\mathbb{R}}
\newcommand{\C}{\mathbb{C}}
\newcommand{\rank}{\operatorname{rank}}
\newcommand{\wstar}{\stackrel{*}{\rightharpoonup}}
\renewcommand{\geq}{\geqslant}
\renewcommand{\leq}{\leqslant}
\newcommand{\imag}{\operatorname{i}}
\newcommand{\per}{\operatorname{per}}
\title{Syzygies, constant rank, and beyond}
\author{Marc Härkönen\thanks{Georgia Institute of Technology, Atlanta, GA, USA, \url{harkonen@gatech.edu}}, Lisa Nicklasson\thanks{Università degli Studi di Genova, Genova, Italy, \url{nicklasson@dima.unige.it}}, Bogdan Rai\cb{t}ă\thanks{Scuola Normale Superiore, Pisa, Italy, \url{bogdan.raita@sns.it}}}
\begin{document}
	\maketitle
	
	\begin{abstract}
		\noindent We study linear PDE with constant coefficients. The constant rank condition on a system of linear PDEs with constant
		coefficients is often used in the theory of compensated compactness.
		While this is a purely linear algebraic condition, the nonlinear algebra
		concept of primary decomposition is another important tool for
		studying such system of PDEs. In this paper we investigate the
		connection between these two concepts.
		From the nonlinear analysis point of view, we make some progress in the study of weak lower semicontinuity of integral functionals defined on sequences of PDE constrained fields, when the PDEs do not have constant rank.
	\end{abstract}
	\section{Introduction} 
	We start from the following vague question:
	\begin{quote}
		To what extent can we \emph{solve} a system of linear PDEs with constant coefficients, $Av=0$?
	\end{quote}
	Of course, the answer depends greatly on the solution space for the unknown $v$. For instance, if we solve over the space of test functions $C_c^\infty(\R^n)$, it has been known for a long time \cite{M60} that the system admits a vector potential $v=Su$, where $S$ is another linear differential operator with constant coefficients, namely the syzygy matrix of $A$. Moreover, the operator $S$ can be computed using standard Gr\"obner basis techniques, implemented in computer algebra software such as Macaulay2 \cite{M2}. If we are interested in smooth solutions,  i.e.  in $C^\infty(\R^n)$, the existence of vector potentials is no longer guaranteed, though the extent to which the image of $S$ in this space describes the solution set is encoded in the algebraic properties of the associated polynomial module. These algebraic properties are reflected in decompositions of the solution space, see \cite{shankar_notes} and our presentation in \Cref{sec:C-UC}. 
	
	Another angle from which we would  like to investigate linear systems of PDE comes from the analysis of continuum mechanics problems, where one often studies a nonlinear relation without derivatives, coupled with a linear PDE \cite{Tartar2005}; this is the so called theory of compensated compactness \cite{Tartar1979,Murat1981,FonsecaMuller1999,GuerraRaita2019}. Since the natural  mode of convergence for such problems is a convergence of measurements (averages), spaces of smooth functions are not spaces where one can expect existence of solutions for the nonlinear problems. One rather works with spaces defined by integration, such as the Lebesgue spaces $L^p(\R^n)$. In this case, one would like to have solutions of $Av=0$ that satisfy suitable integral estimates; in this case it is reasonable to work with  (row) homogeneous operators $A$, an assumption that we keep in the introduction. For the framework of compensated compactness, the class of linear PDEs that was studied most is that of (real) constant rank operators, as it gives rise to good integral estimates.
	We say that an operator has $\mathbb F$-constant rank, $\mathbb F\in\{\R,\C\}$, if $\rank A(x)$ is constant for all $0\neq x\in\mathbb F^n$, also see Definition~\ref{def:CR}.
	In a sense, the class of real constant rank operators is the largest class where we can hope for standard harmonic analysis estimates \cite{GuerraRaita2020}. However, the existing study of this pointwise condition on the evaluations $A(x)$ falls well under the limitations of linear algebra \cite{FonsecaMuller1999,Raita2019,Raita2021}. 
	
	Thus, one of the main questions we address here is how can we link the nonlinear algebra concepts of primary decomposition to a condition that was, so far, viewed only through a linear lens.  One somewhat surprising fact is that there exists real or complex constant rank operators that do not admit a vector potential in $C^\infty(\R^n)$; such operators are as simple as the Laplacian $x_1^2+x_2^2$ or the gradient operator. On the other hand, any real constant rank operator admits a vector potential in the space
	$
	C^\infty_{\#}(Q)=\{f\in C^\infty(\R^n)\colon f\text{ is $Q$-periodic},\,\int_{Q}f=0\}
	$ of periodic functions of zero average on the cube $Q=(0,1)^n$ \cite{Raita2019}. Interestingly, the vector potential constructed there is not necessarily given by $S$, but can be replaced by it, see \Cref{cor:const_rank_exact_all_nonzero} and the discussion thereafter.
	
	Another idea used in analysis \cite{Murat1981,FonsecaMuller1999,Raita2019} formally gives a decomposition that looks very similar to the controllable-uncontrollable decomposition \eqref{eq:CUC} below: If $A$ has real constant rank and we are looking for $L^p(\Omega)$ solutions to $Av=0$ in a bounded domain $\Omega$, we can write
	\begin{align}\label{eq:error}
		v=Su+\texttt{error}
	\end{align}
	where $u\in C_c^\infty(\Omega)$ and the \texttt{error} is negligible in $L^p$. This does \emph{not} match the algebraic decomposition
	\begin{align}\label{eq:CUC}
		\ker_{C^\infty}A=\im_{C^\infty} S+\ker_{C^\infty} A_u,
	\end{align}
	where $A_u$ is an operator with trivial syzygy matrix. Although the similarity  between \eqref{eq:error} and \eqref{eq:CUC} is striking, not much has been done to explore possible connections. In Theorem~\ref{thm:intro_real_elliptic}, we make a first step in this direction, by proving that the constant rank condition of $A$ implies the  ellipticity of $A_u$. 
	%
	\begin{theorem}\label{thm:intro_real_elliptic}
		Let $A$ have real (resp. complex) constant rank. Then $A$ has a controllable-uncontrollable decomposition as in \eqref{eq:CUC} with real (resp. complex) elliptic $A_u$.
	\end{theorem}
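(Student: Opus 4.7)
The strategy is to identify the characteristic variety of $A_u$ as a subvariety of the rank-drop locus of $A$, then invoke the constant rank hypothesis to conclude ellipticity. Let $R = \mathbb{F}[\xi_1, \ldots, \xi_n]$ and let $M$ be the polynomial module associated to $A$, as in \Cref{sec:C-UC}, with torsion submodule $M_t$ sitting in the short exact sequence
\[
0 \to M_t \to M \to M/M_t \to 0.
\]
The CUC decomposition \eqref{eq:CUC} mirrors this splitting: the syzygy $S$ presents the torsion-free quotient $M/M_t$ (the controllable part), while $A_u$ is constructed to present $M_t$. The ``trivial syzygy'' condition on $A_u$ translates into injectivity of the polynomial matrix realizing $A_u$, which forces the associated cokernel to be pure torsion with support equal to the rank-drop locus of $A_u$.

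The key algebraic observation is that $\supp_R M_t$ is contained in the rank-drop locus of $A$. Indeed, at any $\xi$ where $A(\xi)$ attains its maximal (equivalently, generic) rank $r$, upper semi-continuity of rank gives a Zariski-open neighborhood on which $A$ has constant rank $r$; the cokernel of a constant-rank matrix is locally free there, hence torsion-free, so $(M_t)_{m_\xi} = 0$. Thus
\[
\supp M_t \subseteq \{\xi \in \operatorname{Spec} R : \rank A(\xi) < r\}.
\]
Identifying the characteristic variety of $A_u$ with $\supp M_t$, the $\mathbb{F}$-constant rank hypothesis $\{\xi \in \mathbb{F}^n : \rank A(\xi) < r\} = \{0\}$ forces the rank-drop locus of $A_u$ to meet $\mathbb{F}^n$ only at the origin. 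Hence $A_u(\xi)$ attains its maximal rank for every $\xi \in \mathbb{F}^n \setminus \{0\}$, i.e., $A_u$ is $\mathbb{F}$-elliptic.

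The main obstacle is the precise algebraic bookkeeping: ensuring that the CUC construction from \Cref{sec:C-UC} produces an $A_u$ whose characteristic variety is exactly $\supp M_t$ rather than something larger. This hinges on the injectivity built into the ``trivial syzygy'' condition, which rules out superfluous components by making the associated cokernel pure torsion; if $M_t$ has projective dimension greater than one, an extra step embedding $M_t$ into a module with a length-one resolution may be required. The real/complex distinction is otherwise automatic: the inclusion of varieties is identical in both cases, and only the constant rank input changes when intersecting with $\mathbb{F}^n$.
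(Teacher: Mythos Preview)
Your strategy is correct and lands on essentially the same endpoint as the paper, but via a different technical route. The paper proves that every nonzero associated prime $P$ of $M=R^k/\im A^\top$ satisfies $V(P)\subseteq\{\xi:\rank A(\xi)<r\}$ by taking a free resolution of $M$ and invoking two facts from Eisenbud: that associated primes contain the Fitting ideal $I(\phi_d)$ at their depth $d$, and that the radicals $\sqrt{I(\phi_k)}$ are nested. Combined with the identification of the characteristic variety of $A_u$ as $\bigcup_{P\neq(0)} V(P)$ (Corollary~\ref{cor:Au_ellptic}), this yields ellipticity under constant rank. Your argument replaces the depth machinery with the observation that at a maximal-rank point the cokernel of $A^\top$ is locally free (via the Fitting criterion, since $I_{r+1}(A^\top)=0$ globally and $I_r(A^\top)$ is a unit locally), hence torsion-free, so $M_t$ is supported on the rank-drop locus. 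This is arguably more direct.

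One point needs fixing. You write that ``$A_u$ is constructed to present $M_t$,'' but in the construction of Proposition~\ref{prop:decomp} this is false: $A_u^\top$ presents $R^k/\im A_u^\top$, which contains $M_t\cong (\im A_c^\top+\im A_u^\top)/\im A_u^\top$ as a proper submodule in general. What saves your argument is that the two modules share the same associated primes---this is exactly property~6 of Proposition~\ref{prop:decomp}, which gives $\Ass(R^k/\im A_u^\top)=\Ass(M)\setminus\{(0)\}=\Ass(M_t)$---and hence the same support. Once you cite that property, your ``main obstacle'' paragraph dissolves: no embedding step or projective-dimension caveat is needed, and the identification of the characteristic variety of $A_u$ with $\supp M_t$ is immediate.
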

	The converse is not true, as can be seen from Examples \ref{exa:Euler} and \ref{exa:ctrl_not_RCR}. Details on the decomposition \eqref{eq:CUC} can be found in \Cref{sec:C-UC}; the relevant definitions are in Subsection~\ref{ssec:operators}. 
	
	This result bridges the gap between \eqref{eq:error} and \eqref{eq:CUC} in the following sense: if the relation in \eqref{eq:CUC} would extend, say by approximation, to locally integrable vector fields $v$, we would have that 
	$$
	v=Su+f,\quad\text{where }A_u f=0.
	$$
	By ellipticity of $A_u$, $f$ is real analytic, so all the roughness of $v$ is carried by the potential part, $Su$. This is also the phenomenon we encounter in \eqref{eq:error}. Moreover, if \eqref{eq:CUC} extends to periodic fields, we would immediately retrieve the fact that $S$ is a vector potential for $A$ in $C^\infty_\#$, retrieving \cite[Lem.~2]{Raita2019}. This is so because the null space of an elliptic operator, as is $A_u$, is trivial  over $C^\infty_\#$.
	
	Our approach to prove \Cref{thm:intro_real_elliptic} consists of linking the controllable-uncontrollable decomposition \eqref{eq:CUC} to pointwise properties of the evaluation map of $A$. The following is the main novelty of our work:
	\begin{theorem}\label{thm:main}
		Let $A$ be a polynomial matrix with complex coefficients that has a controllable-uncontrollable decomposition as in \eqref{eq:CUC}. Then 
		$$\ker_\C A_u(\xi)=\{0\}\quad \text{for all }\xi\in \C^n\text{ such that }\rank_\C A(\xi)=\rank_{\C[x]} A.$$
	\end{theorem}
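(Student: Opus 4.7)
My strategy is to reformulate the pointwise rank condition as a support containment for a suitable $\C[x]$-module, and then exploit local freeness of the primary module $M = \coker A^T$ off $V_A = \{\xi \in \C^n : \rank_\C A(\xi) < r\}$ (with $r = \rank_{\C[x]} A$). Setting $M_u = \coker A_u^T$, the condition $\ker_\C A_u(\xi) = \{0\}$ is equivalent to $A_u(\xi)$ having full column rank, hence to $\xi \notin \supp M_u$. Thus the theorem reduces to $\supp M_u \subset V_A$, which I will establish via the chain $\supp M_u \subset \supp t(M) \subset V_A$, where $t(M) \subset M$ denotes the torsion submodule.

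The right-hand inclusion is a local consequence of the generic rank condition. At $\xi_0 \notin V_A$, the $r$ columns of $A^T$ that are $\C$-linearly independent at $\xi_0$ $R_{\mathfrak{m}_{\xi_0}}$-generate (by Nakayama) a free rank-$r$ direct summand of $R_{\mathfrak{m}_{\xi_0}}^d$, where $R = \C[x]$. Hence $M_{\mathfrak{m}_{\xi_0}}$ is free of rank $d - r$, in particular torsion-free, and so $(t(M))_{\mathfrak{m}_{\xi_0}} = 0$, giving $\xi_0 \notin \supp t(M)$.

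The left-hand inclusion $\supp M_u \subset \supp t(M)$ is the main algebraic content of the theorem. Applying $\Hom_{\C[x]}(-,\mathcal{F})$ (with $\mathcal{F}$ an injective cogenerator) to the short exact sequence $0 \to t(M) \to M \to M_c \to 0$ yields the identification $\ker A / \im S \cong \Hom_{\C[x]}(t(M), \mathcal{F})$, and the decomposition $\ker A = \im S + \ker A_u$ says that $\ker A_u$ surjects onto this quotient. Under the specific controllable-uncontrollable decomposition set up in Section \ref{sec:C-UC}, one obtains the direct-sum refinement $\ker A = \im S \oplus \ker A_u$, which corresponds under $\Hom$-duality to the module identification $M_u \cong t(M)$; hence $\supp M_u = \supp t(M) \subset V_A$, completing the proof.

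The main obstacle I anticipate is extracting the identification $M_u \cong t(M)$ directly from the decomposition: a bare sum $\ker A = \im S + \ker A_u$ with $A_u$ of trivial syzygies does not by itself suffice, as redundant augmentations of $A$ can yield $M_u$ strictly larger than $t(M)$, with support spilling outside $V_A$. The proof therefore hinges on the additional structural constraint (directness of the sum, equivalently $\im S \cap \ker A_u = 0$) built into the definition in Section \ref{sec:C-UC}.
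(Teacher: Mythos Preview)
Your reformulation in terms of supports is correct, and your argument for the inclusion $\supp t(M) \subset V_A$ via local freeness of $M_{\mathfrak{m}_{\xi_0}}$ is valid and in fact gives a cleaner route to the paper's \Cref{thm:CR-ass} than the Fitting-ideal machinery the paper invokes (\Cref{lem:exact_seq_radical,lem:exact_seq_ass}). Indeed, once you know $M_{\mathfrak{m}_{\xi_0}}$ is free for $\xi_0 \notin V_A$, every nonzero $P \in \Ass M$ must avoid $\mathfrak{m}_{\xi_0}$, so $V(P) \subset V_A$ --- exactly the content of \Cref{thm:CR-ass}.

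The gap is in the other inclusion. Your claim that the decomposition of \Cref{prop:decomp} is a \emph{direct} sum (equivalently $M_u \cong t(M)$, equivalently $\im A_c^\top + \im A_u^\top = R^k$) is false in general, and nothing in \Cref{sec:C-UC} enforces it. The construction there starts from \emph{a} primary decomposition; since every nonzero associated prime contains $(0)$, these primes are embedded and their components are not unique. Concretely, take $A = (x_1 \ \ 0) \in R^{1\times 2}$ so that $\im A^\top = R\cdot(x_1,0)^\top$, with $\Ass M = \{(0),(x_1)\}$ and $Q_0 = R\cdot e_1$. Both $Q_1 = x_1 R\cdot e_1 + R\cdot e_2$ and $Q_1' = x_1 R^2$ are legitimate $(x_1)$-primary components. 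For the second choice $A_u' = x_1 I_2$, one computes $Q_0 + Q_1' = R\cdot e_1 + x_1 R\cdot e_2 \subsetneq R^2$, so the sum $\im_\mathcal{F} S + \ker_\mathcal{F} A_u'$ is not direct, and $M_u' \cong (R/(x_1))^2 \not\cong R/(x_1) \cong t(M)$.

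The fix is painless and already implicit in your local-freeness step: bypass $t(M)$ and use \Cref{prop:decomp}(6), which \emph{is} guaranteed by the construction and gives $\Ass(M_u) = \Ass(M)\setminus\{(0)\}$. Since $\supp M_u = \bigcup_{P \in \Ass M_u} V(P)$ and you have already shown each such $V(P) \subset V_A$, you are done. This is precisely how the paper closes the argument via \Cref{cor:Au_ellptic}, so after the correction your proof and the paper's converge at the final step, differing only in the (more elementary) mechanism you use to control the nonzero associated primes.
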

	We prove this result using tools from commutative algebra in \Cref{sec:C-CR}, see \Cref{thm:CR-elliptic}.
	Using the same tools, we can also derive the complex version that improves the real result proved recently in \cite{Raita2021} using linear algebra techniques:
	\begin{theorem}\label{thm:main_eval}
		Let $A$ be a polynomial matrix with complex coefficients and syzygy matrix $S$. Then
		$$
		\ker_\C A(\xi)={\im}_\C S(\xi)\quad\text{for all }\xi\in \C^n\text{ such that }\rank_\C A(\xi)=\rank_{\C[x]} A.
		$$
		If $\rank_\C A(\xi)<\rank_{\C[x]} A$, we have $\ker_\C A(\xi)\supsetneq \im_\C S(\xi)$.
	\end{theorem}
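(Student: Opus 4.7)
The plan is to realize the defect $\ker_\C A(\xi)/\im_\C S(\xi)$ as a Tor group, and then detect its vanishing via a Fitting ideal criterion for local freeness of the cokernel of $A$. Since $AS=0$ as polynomial matrices, evaluating at any $\xi$ yields the automatic inclusion $\im_\C S(\xi)\subseteq \ker_\C A(\xi)$ for every $\xi\in\C^n$, so the whole question is when this inclusion is an equality.

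First, I would set $M:=\coker A$, a finitely presented $\C[x]$-module; write $\ell,k,m$ for the relevant free-module ranks. Because $S$ generates the first syzygy module of $A$, the complex
\begin{equation*}
\C[x]^m\xrightarrow{S}\C[x]^k\xrightarrow{A}\C[x]^\ell\to M\to 0
\end{equation*}
is exact at $\C[x]^k$ and extends leftwards to a free resolution $F_\bullet\to M$ whose first two differentials are $S$ and $A$. Tensoring with the residue field $\C_\xi:=\C[x]/\mathfrak m_\xi$ and computing $H_1$ of $F_\bullet\otimes \C_\xi$ identifies
\begin{equation*}
\operatorname{Tor}_1^{\C[x]}(M,\C_\xi)\;=\;\ker_\C A(\xi)/\im_\C S(\xi).
\end{equation*}
Since $M$ is finitely generated and $\C[x]_{\mathfrak m_\xi}$ is Noetherian local, flatness, freeness, and the vanishing of $\operatorname{Tor}_1(-,\C_\xi)$ all coincide for $M_{\mathfrak m_\xi}$; hence the above Tor vanishes iff $M$ is locally free at $\xi$.

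Next, I translate local freeness into the pointwise rank condition on $A$. Setting $r=\rank_{\C[x]}A$, all $(r+1)\times(r+1)$ minors of $A$ vanish identically, so $\Fitt_{\ell-r-1}(M)=I_{r+1}(A)=0$, and $M$ has generic rank $\ell-r$. By Fitting's lemma, $M_{\mathfrak m_\xi}$ is free of rank $\ell-r$ exactly when $\Fitt_{\ell-r}(M)=I_r(A)\not\subseteq \mathfrak m_\xi$, i.e., exactly when some $r\times r$ minor of $A$ is non-vanishing at $\xi$, which is the same as $\rank_\C A(\xi)=r$. Assembling the three equivalences gives
\begin{equation*}
\rank_\C A(\xi)=r\ \Longleftrightarrow\ M\text{ locally free at }\xi\ \Longleftrightarrow\ \ker_\C A(\xi)=\im_\C S(\xi).
\end{equation*}
Since $\rank_\C A(\xi)\leq r$ holds for every $\xi$, the contrapositive of this chain delivers the strict inclusion in the rank-drop case.

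The only real obstacle is bookkeeping: one has to align the index conventions in Fitting's lemma with the sizes of $A$ and the generic rank $r$, and verify that the hypothesis $I_{r+1}(A)=0$ really identifies the left-neighbour Fitting ideal $\Fitt_{\ell-r-1}(M)$ with zero, so that Fitting's criterion reduces exactly to $\rank_\C A(\xi)=r$. Once this is pinned down, the Tor computation, the local flat-equals-free equivalence, and Fitting's lemma slot together formally.
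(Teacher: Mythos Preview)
Your argument is correct and takes a genuinely different route from the paper's own proof (recast there as Theorem~\ref{thm:pointwise_exact}). The paper works with the same module $M=\coker A$ and a free resolution beginning with $A,S$, but instead of computing $\operatorname{Tor}_1(M,\C_\xi)$ it invokes the radical containment $\sqrt{I(A)}\subseteq\sqrt{I(S)}$ from \cite[Cor.~20.12]{eisenbud:commalg} (Lemma~\ref{lem:exact_seq_radical}): the locus where $S$ drops rank is contained in the locus where $A$ drops rank, so at any $\xi$ with $\rank A(\xi)$ maximal, $\rank S(\xi)$ is also maximal, and then rank--nullity gives $\dim\ker A(\xi)=k-r=\dim\im S(\xi)$, whence equality. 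Your Tor/Fitting approach is more conceptual: it identifies the defect $\ker A(\xi)/\im S(\xi)$ as an invariant of $M$ alone and packages the rank condition as a local-freeness criterion, so the full equivalence---and hence the strict inclusion at rank-drop points---drops out formally, whereas the paper only argues the forward direction explicitly and leaves the strict inclusion to an implicit dimension count. The paper's argument, on the other hand, is shorter and yields the concrete rank comparison between $A$ and $S$ as a byproduct. Both ultimately rest on the same Fitting-ideal machinery in Eisenbud, just accessed from different ends.
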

	It is our hope that by better understanding the controllable-uncontrollable decomposition \eqref{eq:CUC} pointwise, we will be able to derive \emph{nonstandard} quantitative estimates for solutions of general  systems of linear PDE.
	
	As a simple consequence, we note that if $A$ has complex constant rank, then the pointwise exact relation $\ker_\C A(\xi)={\im}_\C S(\xi)$ holds for all nonzero $\xi\in\C^n$.
	More generally \Cref{thm:main_eval} tells us precisely for which points the exact relation
	\begin{align*}
		\ker_{C_c^\infty}A=\im_{C_c^\infty} S
	\end{align*}
	translates to an exact relation for the evaluations of $A$ and $S$.  \Cref{thm:main_eval} also implies the main results in \cite{Raita2019,Raita2021}.
	
	Another notable consequence of Theorem~\ref{thm:main_eval} is the characterization of the  \emph{uncontrollable} operators, $A_u$ from \eqref{eq:CUC}, which are characterized by the fact that they have trivial syzygy matrix, $\ker_{\C[x]}A_u=\{0\}$. This is in turn equivalent with $\ker_\C A_u(\xi)=\{0\}$ for $\xi$ outside a proper real/complex variety a proper real/complex variety, see also \Cref{cor:uncontrollable}.
	
	The controllable-uncontrollable decomposition also provides a finer lens for the study of \emph{wave solutions}.
	In \cite{harkonen21} authors describe projective varieties that characterize possible wave solutions.
	These are constructed by looking at how $\ker_\CC A(\xi)$ changes when $\xi \in \pi$, and $\pi$ ranges through all real linear spaces of a certain specified dimension.
	In particular, \Cref{thm:main} shows that the uncontrollable part may contribute wave solutions only if the proper algebraic variety where the rank of $A(\xi)$ drops contains real linear spaces.
	A future research question would be to characterize the waves that arise from controllable and uncontrollable differential operators.
	
	Lastly, we will use some of our understanding of the algebraic properties of linear PDE to derive new results in the theory of compensated compactness. The question we are interested in is that of lower semicontinuity of integral functionals with respect to weakly convergent sequences $v_j$ that satisfy $Av_j=0$. This problem was solved completely under the real constant rank assumption in \cite{Dacorogna1982,FonsecaMuller1999,GuerraRaita2019}. However, in the absence of the rank condition, very little is known \cite{Muller1999,LeeMullerMuller2009}. We present the question in detail in Section~\ref{sec:cov} and make some progress in this unexplored direction in Theorem~\ref{thm:lsc_main}.

	The paper is organized as follows.
	In \Cref{sec:prelim} we introduce our setup and recall definitions and useful results, along with several concrete examples.
	In Subsection~\ref{ssec:modules} we describe how PDE correspond to polynomial modules, and connect algebraic properties of the module with analytic properties of the solution set.
	The concepts of ellipticity and constant rank are introduced in Subsection~\ref{ssec:operators}.
	Primary decomposition of modules is used in \Cref{sec:C-UC} to describe the controllable-uncontrollable decomposition from control theory.
	The decomposition splits the solutions obtained from the syzygy matrix, the controllable part, from the rest of the solutions, the uncontrollable part.
	We begin \Cref{sec:C-CR} by studying pointwise evaluations of the operators obtained from the controllable-uncontrollable decomposition.
	Our focus will be points where the rank of $A$ drops, which will lead to \Cref{thm:main}.
	\Cref{thm:intro_real_elliptic} then follows from the fact that constant rank operators do not drop rank outside the origin.
	We also prove Theorem~\ref{thm:main_eval}, recasted as Theorem~\ref{thm:pointwise_exact}, in Section~\ref{sec:C-CR}. In \Cref{sec:cov} we shift gears to discuss some problems from the calculus of variations under linear PDE constraints that are \emph{not} of constant rank.

	\subsection*{Acknowledgement} The authors would like to thank the Max Planck Institute for Mathematics in the Sciences, where most of this research was conducted, for the support and resources. They also thank B. Sturmfels and D. Agostini for numerous helpful suggestions. 
	M.H. is partially supported by NSF DMS-1719968 and NSF DMS-2001267.
	
	\newpage
	\section{Preliminaries}\label{sec:prelim}
	We will work over the polynomial ring $R=\mathbb F[x_1,\dotsc,x_n]$, where $\mathbb{F}\in\{\R,\C\}$. For the algebraic sections we will mostly consider $\mathbb F=\C$, whereas in the analytic Section~\ref{sec:cov} we will only investigate PDEs with real coefficients, $\mathbb F=\R$. Our PDE operators will be identified with polynomial matrices $A\in R^{\ell\times k}$, where the symbol $x_i$ corresponds to the operator $\partial_i = \frac{\partial}{\partial z_i}$. For most of the paper, the differential operators act on complex valued functions or distributions defined on $\R^n$, $v\in C^\infty(\R^n,\C)^k$ or $v\in \mathcal D'(\R^n,\C)^k$. Other spaces of functions or distributions will be considered as well. In particular, in Section~\ref{sec:cov}, the case of complex valued vector fields reduces to the study of real valued vector fields since the differential operators there have real coefficients.
	To solve the PDE $Av = 0$ means finding a $k$-tuple of $n$-variate functions $v = v(z_1,\dotsc,z_n)$ (or distributions on $\RR^n$), chosen from a suitable space of functions, such that the $\ell$ equations
	\begin{align*}
		\sum_{i=1}^k A_{ji}(\partial_1,\dotsc,\partial_n)v_i = 0
	\end{align*}
	are satisfied for all $j=1,\dotsc,\ell$.
	If $\mathcal{F}$ is a space of functions, the set of $v \in \mathcal{F}^k$ such that $Av = 0$ is denoted $\ker_\mathcal{F} A$.
	
	In some results, we will make the following homogeneity assumption:
	\begin{assumption}[Homogeneity]\label{ass:hom}
		We say that $A$ is (row-)homogeneous if for each $i=1,\ldots, k$ there exist integers $d_i$ such that $A_{ij}$ is homogeneous of degree $d_i$ for each $j=1,\ldots,\ell$.
	\end{assumption}
	
	\subsection{Modules over Polynomial Rings}\label{ssec:modules}
	The polynomial matrix $A$ describes a morphism of $R$-modules $A \colon R^k \to R^\ell$.
	Its kernel $\ker_R A$ is a finitely generated $R$-module, generated by $\{s_1,\dotsc,s_{k'}\} \subseteq R^k$.
	This in turn describes a morphism $S \colon R^{k'} \to R^k$, where $S$ is the matrix whose columns are $s_1, \dotsc, s_{k'}$.
	The matrix $S$ is called the \emph{syzygy matrix} of $A$, and makes the sequence
	\begin{align}\label{eq:syzygy}
		R^{k'} \xrightarrow{S} R^k \xrightarrow{A} R^\ell
	\end{align}
	exact, i.e. $\ker_R A = \im_R S$.
	
	Let $\mathcal{F}$ be an $R$-module, for example one of the spaces of functions or distributions discussed above.
	Tensoring the sequence \eqref{eq:syzygy} by $\mathcal{F}$ yields the complex
	\begin{align}\label{eq:syzygy_tensored}
		\mathcal{F}^{k'} \xrightarrow{S} \mathcal{F}^k \xrightarrow{A} \mathcal{F}^\ell,
	\end{align}
	which implies that $\im_\mathcal{F} S \subseteq \ker_\mathcal{F} A$.
	Note that the inclusion may be strict, as the sequence \eqref{eq:syzygy_tensored} need not be exact.
	In 1960, Malgrange \cite{M60} showed that the space of test functions $C_c^\infty$ is a \emph{flat} $R$-module, that is tensoring by $C_c^\infty$ preserves exactness.
	As a result of the exactness of \eqref{eq:syzygy_tensored} when $\mathcal{F} = C_c^\infty$, solving $Av = 0$ over $C_c^\infty$ boils down to a simple syzygy computation, a standard operation in commutative algebra. Due to its significance, we record this result here:
	\begin{theorem}\label{thm:exact_Ccinfty}
		Let $A\in R^{\ell \times k}$ be a polynomial matrix and $S$ be its syzygy matrix. Then
		$$
		{\ker_{C_c^\infty(\R^n)}}A=\im_{C_c^\infty(\R^n)}S.
		$$
	\end{theorem}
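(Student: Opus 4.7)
The inclusion $\im_{C_c^\infty(\R^n)}S \subseteq \ker_{C_c^\infty(\R^n)}A$ is immediate: since the columns of $S$ generate $\ker_R A$, one has $AS=0$ as an identity of polynomial matrices, so $A(\partial)(S(\partial)u)=0$ for every $u \in C_c^\infty(\R^n)^{k'}$.

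For the reverse inclusion, my plan is to pass to the Fourier side. Fix $v \in C_c^\infty(\R^n)^k$ with $A(\partial)v=0$. By the Paley--Wiener theorem, $\hat v$ extends to a vector of entire functions on $\C^n$ of exponential type determined by the support of $v$, and the PDE becomes the pointwise identity $A(i\xi)\hat v(\xi)=0$ for all $\xi \in \C^n$. I would then construct an entire $\hat u$ in the same Paley--Wiener class satisfying $S(i\xi)\hat u(\xi)=\hat v(\xi)$; Fourier inversion then delivers a compactly supported smooth $u$ with $Su=v$.

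The main obstacle is this division problem with growth bounds. One must produce $\hat u$ as a single holomorphic vector-valued function on $\C^n$, with exponential type matching that of $\hat v$, using only the module-theoretic data that $S$ is a syzygy matrix of $A$: a priori one cannot even guarantee pointwise that $\hat v(\xi)\in \im_\C S(i\xi)$, since at points where $\rank_\C A(\xi)$ drops below $\rank_{\C[x]} A$ the inclusion $\ker_\C A(\xi)\supseteq \im_\C S(\xi)$ can be strict, a dichotomy sharpened in \Cref{thm:main_eval}. The canonical tool is H\"ormander's $L^2$-theory for $\bar\partial$ with plurisubharmonic weights on $\C^n$, or equivalently the Ehrenpreis--Palamodov fundamental principle; either gives the required holomorphic preimage with controlled exponential type.

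An alternative Malgrange-style route bypasses Fourier analysis: one proves that $\mathcal D'(\R^n)$ is an injective $R$-module, which reduces to the existence of a fundamental solution for every nonzero scalar constant-coefficient operator, and then deduces flatness of $C_c^\infty(\R^n)$ --- and hence exactness of \eqref{eq:syzygy_tensored} --- by a duality argument applied to the finite free resolution of the cokernel module $R^k/\im_R S$.
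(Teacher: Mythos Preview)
The paper does not prove this theorem itself; it simply records it as an immediate consequence of Malgrange's result \cite{M60} that $C_c^\infty(\R^n)$ is a flat $R$-module, so that tensoring the exact sequence \eqref{eq:syzygy} by $C_c^\infty$ preserves exactness. Your second, ``Malgrange-style'' paragraph is exactly this route, so in that sense your proposal already contains the paper's approach. Your first, Fourier/Paley--Wiener route is a genuine and more explicit alternative; the outline is sound and you have correctly isolated the crux (holomorphic division with Paley--Wiener growth bounds, including across the locus where $\rank A(\xi)$ drops) and named the right machinery. One caution on your Malgrange-style sketch: the assertion that injectivity of $\mathcal{D}'(\R^n)$ ``reduces to the existence of a fundamental solution for every nonzero scalar constant-coefficient operator'' is too quick when $n\geq 2$, since $\C[x_1,\dotsc,x_n]$ is then not a principal ideal domain and divisibility alone does not yield injectivity; bridging that gap --- equivalently, establishing flatness of $C_c^\infty$ --- is precisely the content of Malgrange's division theorem (or of the deeper Ehrenpreis--Palamodov theory you also cite), so the reduction you describe is the beginning of the argument rather than the whole of it.
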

	
	In contrast, the function spaces $C^\infty$ and $\mathcal{D}'$ are certainly not flat, as for example the PDE $\partial_1 v = 0$ has a solution $v = \exp(z_2)$, but the map $x_1 \colon R \to R$ is injective.
	Instead, one of the consequences of the Ehrenpreis-Palamodov fundamental principle \cite{ehrenpreis_book, palamodov_book} is that $C^\infty$ and $\mathcal{D}'$ are \emph{injective cogenerators}, a property that induces a strong duality between $R$-submodules of $R^k$ and sets of solutions $\ker_\mathcal{F} A$ (sometimes called \emph{systems} in the control theory literature).
	We will mention properties of injective cogenerators as needed. For a more detailed account, see the article by Oberst \cite{oberst90}.
	
	Suppose $\mathcal{F}$ is an injective cogenerator.
	The transpose of $A$ and $S$ give the complex
	\begin{align}\label{eq:transposed_syzygies}
		R^{k'} \xleftarrow{S^\top} R^k \xleftarrow{A^\top} R^\ell,
	\end{align}
	which again may not be exact.
	Applying $\Hom_R(\;\cdot\;, \mathcal{F})$ to \eqref{eq:transposed_syzygies} 
	gives the complex \eqref{eq:syzygy_tensored}
	which is exact if and only if \eqref{eq:transposed_syzygies} is.
	The extent to which the complexes \eqref{eq:syzygy_tensored} and \eqref{eq:transposed_syzygies} fail to be exact can be characterized purely algebraically.
	We start with a few necessary definitions.
	\begin{definition}
		Let $U$ be an $R$-module.
		An element $u \in U$ is \emph{torsion} if there is some nonzero $r \in R$ such that $ru = 0$.
		The module $U$ is said to be torsion if all of its elements are torsion.
		The module $U$ is \emph{torsion-free} if none of its nonzero elements are torsion.
	\end{definition}
	\begin{definition}
		Let $M$ be an $R$-submodule of $R^k$.
		The prime ideal $P\subseteq R$ is an \emph{associated prime} of $R^k/M$ if there is some $u \in R^k$ such that $u \notin M$ and $P = (M \colon u) := \{r \in R \colon ru \in M \}$. We denote by $\Ass(R^k/M)$ the set of associated primes of $R^k/M$. We say that $R^k/M$ is $P$-primary if $\Ass(R^k/M) = \{P\}$.
	\end{definition}
	
	We use the notation $(f_1, \ldots, f_r)$ for the ideal of $R$ generated by the polynomials $f_1, \ldots, f_r$. In particular $(0)$ denotes the ideal consisting only of the zero polynomial.
	
	The connection between associated primes and the concepts of torsion and torsion-free modules is described the following two lemmas. For our purposes we state the lemmas in terms of the module $R^k/\im A^\top$.
	This will allow us to connect torsion and torsion-free modules to controllable and uncontrollable systems, see \Cref{sec:C-UC}.
	
	\begin{lemma}\label{lem:controllable}
		Let $A\in R^{\ell \times k}$ be a polynomial matrix and $S$ its syzygy matrix. Let $\mathcal F$ be an injective cogenerator, for instance $C^\infty(\R^n)$ or $\mathcal D'(\R^n)$. The following are equivalent:
		\begin{enumerate}
			\item The sequence \eqref{eq:syzygy_tensored} is exact, i.\,e.\ ${\ker_{\mathcal F}}A=\im_{\mathcal F}S$.
			\item The module $R^k/\im A^\top$ is torsion-free.
			\item The module $R^k/\im A^\top$ is $(0)$-primary, or trivial.
		\end{enumerate}
	\end{lemma}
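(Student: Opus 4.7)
The plan is to dispatch $(2)\iff(3)$ via a standard commutative algebra fact, and to prove $(1)\iff(2)$ by dualizing and identifying $\ker S^\top/\im A^\top$ with the torsion submodule of $R^k/\im A^\top$.

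For $(2)\iff(3)$: Over the Noetherian domain $R$, the set of zero divisors on a finitely generated module $M$ equals $\bigcup_{P\in\Ass(M)}P$. Applied to $M=R^k/\im A^\top$, torsion-freeness is equivalent to this union collapsing to $(0)$, i.e.\ $\Ass(M)\subseteq\{(0)\}$; this covers both the $(0)$-primary and the trivial cases in~(3).

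For $(1)\iff(2)$, I would first reduce (1) to the exactness of the dual complex $R^{k'}\xleftarrow{S^\top}R^k\xleftarrow{A^\top}R^\ell$ at $R^k$. Since each $R^n$ is free, $\Hom_R(R^n,\mathcal F)\cong\mathcal F^n$ naturally, and applying $\Hom_R(-,\mathcal F)$ to this dual complex recovers \eqref{eq:syzygy_tensored}. Injectivity of $\mathcal F$ makes $\Hom_R(-,\mathcal F)$ exact, yielding a natural isomorphism $\ker_{\mathcal F}A/\im_{\mathcal F}S\cong\Hom_R(\ker S^\top/\im A^\top,\mathcal F)$; the cogenerator property then ensures the right-hand side vanishes iff $\ker S^\top=\im A^\top$.

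The core step is identifying $\ker S^\top/\im A^\top$ with the torsion submodule of $R^k/\im A^\top$. Let $K=\operatorname{Frac}(R)$; localizing the exact sequence $R^{k'}\xrightarrow{S}R^k\xrightarrow{A}R^\ell$ at $(0)$ gives an exact sequence of $K$-vector spaces, and dualization of vector spaces is exact, so $\ker_K S^\top=\im_K A^\top$ in $K^k$. If $v\in\ker S^\top\subseteq R^k$, then viewing $v=A^\top w$ in $K^k$ and clearing denominators produces $rv\in\im A^\top$ for some nonzero $r\in R$, so $\bar v$ is torsion. Conversely, if $rv=A^\top w'$ for nonzero $r\in R$ and $w'\in R^\ell$, then $r\,S^\top v=S^\top A^\top w'=0$, and torsion-freeness of the free module $R^{k'}$ forces $S^\top v=0$. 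This identifies $\ker S^\top/\im A^\top$ with the torsion submodule, so it vanishes precisely when $R^k/\im A^\top$ is torsion-free. The main obstacle is this last identification, where the localization-and-clear-denominators argument must be handled carefully; the preceding $\Hom$-cohomology reduction is routine once the injective cogenerator formalism is in place.
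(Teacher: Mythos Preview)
Your proof is correct and follows essentially the same approach as the paper. For $(2)\iff(3)$ you and the paper both use the characterization of torsion via associated primes; for $(1)\iff(2)$ the paper simply cites \cite[Prop.~2.1]{shankar_notes}, whereas you supply the natural self-contained argument (reduce via $\Hom_R(-,\mathcal F)$ to exactness of \eqref{eq:transposed_syzygies}, then identify $\ker S^\top/\im A^\top$ with the torsion submodule by localizing at $(0)$), which is exactly the machinery the paper sets up in the paragraphs preceding the lemma.
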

	We say that a system $\ker_{\mathcal F}A$ satisfying the assumptions of Lemma \ref{lem:controllable} is \emph{controllable}.
	\begin{proof}
		A proof of the equivalence between 1.\ and 2.\ can be found in  \cite[Prop.~2.1]{shankar_notes}.
		For the equivalence of 2.\ and 3.\ let $P$ be an associated prime of $R^k/\im A^\top$. By definition $P=(\im A^\top:u)$ for some $u \in R^k$. We may also express this as 
		$$P=\{r \in R \ : \ ru=0 \ \mbox{in} \ R^k/\im A^\top \} .$$
		Hence $P$ contains nonzero elements if and only if $u$ is torsion, when considered as an element of $R^k/\im A^\top$. We can conclude that the module $R^k/\im A^\top$ is torsion-free if and only in $(0)$ is the only associated prime. 
	\end{proof}
	\begin{lemma}\label{lem:torsion}
		Let $A\in R^{\ell \times k}$ be a polynomial matrix and $S$ its syzygy matrix. The following are equivalent:
		\begin{enumerate}
			\item The module $R^k/\im A^\top$ is torsion.
			\item The ideal $(0)$ is not an associated prime of the module $R^k/\im A^\top$.
		\end{enumerate}
	\end{lemma}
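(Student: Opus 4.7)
The plan is to prove both implications directly, using the definitions of torsion and of associated prime given in the excerpt, together with the fact that $R=\mathbb F[x_1,\ldots,x_n]$ is an integral domain so that $(0)$ is a prime ideal of $R$.

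For the implication $(1)\Rightarrow(2)$, I would argue by contradiction. Set $M=\im A^\top\subseteq R^k$ and suppose $(0)$ were an associated prime of $R^k/M$. By the definition recalled above, there would exist $u\in R^k$ with $u\notin M$ such that
\[
(M:u)=\{r\in R : ru\in M\}=(0).
\]
Since $u\notin M$, the class $\bar u$ of $u$ in $R^k/M$ is nonzero; and the equality $(M:u)=(0)$ says that no nonzero $r\in R$ satisfies $r\bar u=0$. Thus $\bar u$ is not a torsion element, contradicting the assumption that $R^k/M$ is torsion.

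For the converse $(2)\Rightarrow(1)$, I would again argue by contrapositive. Suppose $R^k/M$ is not torsion; then there exists $u\in R^k$ with $\bar u\neq 0$ (i.e.\ $u\notin M$) that is not annihilated by any nonzero polynomial. Equivalently,
\[
(M:u)=(0).
\]
Because $R$ is an integral domain, the zero ideal $(0)$ is prime. Hence, by the definition of associated prime (with this particular witness $u$), the ideal $(0)$ belongs to $\Ass(R^k/M)$, contradicting (2). This shows that if $(0)\notin \Ass(R^k/M)$ then every element of $R^k/M$ is torsion, finishing the proof.

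I do not anticipate any real obstacle here: the argument is essentially a translation between the definitions, and the only nonlinear-algebra input is that $R$ is a domain so that the candidate annihilator $(0)$ is automatically a prime ideal and thus admissible as an associated prime. The syzygy matrix $S$ plays no role in the proof itself; it is included in the statement only to parallel Lemma~\ref{lem:controllable} and to set the stage for the subsequent discussion in Section~\ref{sec:C-UC}, where the dichotomy torsion versus torsion-free will be identified with the uncontrollable versus controllable dichotomy.
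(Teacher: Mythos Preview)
Your proof is correct and follows essentially the same approach as the paper's. Both arguments reduce to the observation that, for $u\notin\im A^\top$, the colon ideal $(\im A^\top:u)$ equals $(0)$ precisely when $\bar u$ is non-torsion, and that $(0)$ is prime because $R$ is a domain; the paper compresses this into a single sentence referencing the proof of Lemma~\ref{lem:controllable}, while you spell out both implications and make the integral-domain hypothesis explicit.
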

	We say that a system $\ker_{\mathcal F}A$ satisfying the assumptions of Lemma \ref{lem:torsion} is \emph{uncontrollable}. We will show in Section \ref{sec:C-UC} that uncontrollability of $A$ is also  equivalent with $\ker_R A={0}$ (or $S=0)$.
	\begin{proof}
		As we noted in the proof of Lemma \ref{lem:controllable}, an associated prime $(\im A^\top : u)$ contains nonzero elements if and only if $u$ is torsion. Hence $R^k/\im A^\top$ is torsion if and only if all of its associated primes contains nonzero elements.
	\end{proof}
	
	Let $M$ be an $R$-submodule of $R^k$, and let $\Ass(R^k/M) = \{P_1,\dotsc,P_s\}$.
	Since $R$ is Noetherian, $M$ admits a (minimal, irredundant) \emph{primary decomposition}
	\begin{align}\label{eq:prim_dec}
		M = \bigcap_{i=1}^s M_{i},
	\end{align}
	where each $R^k/M_{i}$ is $P_i$-primary.
	Note that each $M_i \subseteq R^k$ is finitely generated, so $M_i = \im_R A_i^\top$ for some matrix $A_i$.
	Since $\mathcal{F}$ is an injective cogenerator, the primary decomposition translates to a decomposition of the solution space \cite{oberst90, shankar_notes}. If $M = \im_R A^\top$, we have
	\begin{align*}
		\ker_\mathcal{F} A = \sum_{i=1}^s \ker_\mathcal{F} A_{i}.
	\end{align*}
	
	\begin{remark}
		Primary decomposition of modules is built into Macaulay2 since version 1.17.
		If \texttt{M} is a submodule of $\mathtt{R}^\mathtt{k}$, e.g. obtained from \texttt{M = image transpose A}, where \texttt{A} is the matrix corresponding to the PDE $Av=0$, a list of matrices $\{A_i\}_{i=1}^s$ can be obtained by running the commands
		\begin{verbatim}
			primaryDecomposition comodule M /
			(N -> image generators N + image relations N) /
			mingens /
			transpose
		\end{verbatim}
	\end{remark}
	
	\begin{example}\label{ex:primary_decomp}
		Let $R=\CC[x,y,z]$, and consider the PDE given by
		\begin{align*}
			A = \begin{pmatrix}
				0 & -xz^2 & xy^2 \\
				-x^2y^2 & x^4 & 0  \\
				-xyz^2 & z^2 & x^3y-y^2 \\
				-x^2z^2 & 0 & x^4
			\end{pmatrix}
		\end{align*}
		The module $R^3/\im_R A^\top$ has two associated primes, namely $(0)$ and $(x)$.
		The solution set decomposes into $\ker_{C^\infty} A = \ker_{C^\infty} A_1 + \ker_{C^\infty} A_2$, where
		\begin{align*}
			A_1 = \begin{pmatrix}
				0 & -z^2 & y^2 \\
				-z^2 & 0 & x^2 \\
				-y^2 & x^2 & 0
			\end{pmatrix} && A_2 = \begin{pmatrix}
				x^2 & 0 & 0 \\
				0 & x^2 & 0 \\
				0 & 0 & x^2 \\
				0 & -xz^2 & xy^2 \\
				xyz^2 & -z^2 & y^2
			\end{pmatrix}
		\end{align*}
		Using techniques from \cite{harkonen21, manssour21}, we note that $\ker_{C^\infty} A_1 = \im_{C^\infty} S$, where
		\begin{align*}
			S = \begin{pmatrix}
				x^2 \\ y^2 \\ z^2
			\end{pmatrix},
		\end{align*}
		and $\ker_{C^\infty} A_2$ consists of functions of the form
		\begin{align*}
			\begin{pmatrix}
				\phi(b,c) \\ 0 \\ 0
			\end{pmatrix} \qquad \text{ and } \qquad \begin{pmatrix}
				a\psi(b,c) \\ \frac{\partial \psi}{\partial b}(b,c) \\ 0
			\end{pmatrix},
		\end{align*}
		where $x,y,z$ act as $\frac{\partial}{\partial a},\frac{\partial}{\partial b},\frac{\partial}{\partial c}$ respectively, and $\phi$, $\psi$ are smooth functions $\RR^2 \to \CC$.
		Note further that $C^\infty$ can be replaced by $\mathcal{D}'$, or indeed any injective cogenerator.
	\end{example}

	\subsection{Classes of Operators}\label{ssec:operators}
	Our general goal  is to convert algebraic properties of the polynomial matrix $A$ into analytic properties of the system of PDEs $A v=0$. To this end, we will only focus on homogeneous systems, i.e. for the remainder of this subsection, operators $A$ are assumed to satisfy Assumption~\ref{ass:hom}.
	
	The following ellipticity conditions are well understood analytically:
	\begin{definition}
		Let $\mathbb{F}\in\{\R,\C\}$. We say that $A$ is \emph{$\mathbb F$-elliptic} if $\ker_\C A(\xi)=\{0\}$ for all $\xi\in\mathbb F^n\setminus\{0\}$.
	\end{definition}
	It is a classical result, see e.g., \cite{Hormander}, that $\R$-ellipticity of $A$  is equivalent to analyticity of all distributional solutions of $Av=0$. $\C$-ellipticity is also well understood \cite{Smith1970,GRVS,oberst90,manssour21} and is equivalent to the fact that all solutions of $A v=0$ are not only analytic, but actually polynomials. We will revisit aspects of these results later, in Section \ref{sec:C-CR}.
	
	Another important property is that of constant rank, which is particularly relevant in the study of compensated compactness \cite{Murat1981,FonsecaMuller1999,GuerraRaita2019}. 
	\begin{definition}\label{def:CR}
		An operator $A$ is said to be of \emph{$\mathbb F$-constant rank} if there exists an integer $r$ such that $\mathrm{rank}_\C\, A(\xi)=r$ for all $\xi\in\mathbb F^n\setminus\{0\}$.
	\end{definition}
	
	For $A$ to be $\FF$-elliptic it is necessary that $k \le \ell$. If $A$ is $\FF$-elliptic then $A$ has $\FF$-constant rank $k$. 
	
	The class of $\R$-constant rank operators is, roughly speaking, the largest class where standard harmonic analysis results hold, see \cite{FonsecaMuller1999,GuerraRaita2020}. To the best of our knowledge, the $\C$-constant rank condition has not been used in the analysis literature, but it is algebraically easier to handle than the $\RR$-constant rank condition. We will explore this seemingly new condition in detail in Section~\ref{sec:C-CR}. 
	
	We will conclude this subsection with a few examples that illustrate the differences between these conditions. Part of the aim of this paper will be to compare these pointwise  conditions on the evaluations, that come from the ``analysis with estimates'' of the linear PDE systems, with natural conditions that come from the algebraic geometry angle; for instance, see Section~\ref{sec:C-UC} for the notions of \emph{controllability} and \emph{uncontrollability}; cf. Lemmas \ref{lem:controllable} and \ref{lem:torsion}.
	\begin{example}
		The operators
		$$
		A_1=\left( 
		\begin{matrix}
			x_1&0\\
			0&x_2
		\end{matrix}
		\right),\quad A_2=x_1^2-x_2^2
		$$
		do not have $\R$-constant rank. Additional examples can be found in Examples \ref{exa:Euler} and \ref{exa:ctrl_not_RCR}.
	\end{example}
	\begin{example}
		The operators
		$$
		A_3=x_1^2+x_2^2,\quad A_4=
		\left(
		\begin{matrix}
			x_1^2+x_2^2&0&-x_1^2-x_3^2\\
			0&-x_2^2-x_3^2&x_1^2+x_3^2\\
			-x_1^2-x_2^2&x_2^2+x_3^2&0
		\end{matrix}
		\right)
		$$
		have $\R$-constant rank but fail to have $\C$-constant rank. In fact, $A_3$ is $\R$-elliptic but not $\C$-elliptic. 
	\end{example}
	\begin{example}
		The operator
		$$
		A_5=\left( 
		\begin{matrix}
			0&x_3&-x_2\\
			-x_3&0&x_1\\
			x_2&-x_1&0
		\end{matrix}
		\right)
		$$
		is of $\C$-constant rank but not $\R$-elliptic.
	\end{example}
	\begin{example}
		The operators
		$$
		A_6=\left( 
		\begin{matrix}
			x_1\\
			x_2
		\end{matrix}
		\right),\quad  A_7=\left( 
		\begin{matrix}
			x_1^2+x_2^2\\
			x_1^2-x_2^2
		\end{matrix}
		\right), \quad  A_8=\left( 
		\begin{matrix}
			x_1&0\\
			0&x_2\\
			x_2&x_1
		\end{matrix}
		\right)
		$$
		are $\C$-elliptic.
	\end{example}
	\section{Controllable--Uncontrollable Decomposition}\label{sec:C-UC}
	As we saw in \Cref{lem:controllable} the syzygy matrix $S$ describes all smooth solutions to the PDE $Av = 0$ if and only if the corresponding quotient module $R^k/\im_R A^\top$ is torsion-free, since the space of smooth functions $C^\infty$ is an injective cogenerator.
	We recall that in control theory such a system $\ker_\mathcal{F} A$ is said to be controllable, while at the opposite end of the spectrum, the system $\ker_\mathcal{F} A$ is said to be uncontrollable when the quotient module is torsion \cite{shankar_notes}.
	We remark that ``uncontrollable'' is not the same as ``not controllable'', since an $R$-module $M$ can have a set of torsion elements that is a nontrivial, strict subset of $M$.
	By exploiting the primary decomposition, we can decompose any solution space into two subspaces, one of which is controllable and the other one uncontrollable.
	\begin{proposition}[Controllable-uncontrollable decomposition]\label{prop:decomp}
		Let $\mathcal{F} = C^\infty(\R^n)$ or $\mathcal{D}'(\RR^n)$ (or any injective cogenerator), and $A \in R^{\ell \times k}$.
		There exist polynomial matrices $A_c, A_u, S$ such that
		we have a decomposition
		\begin{align*}
			\ker_\mathcal{F} A = {\im}_\mathcal{F} S + \ker_\mathcal{F} A_u,
		\end{align*}
		where
		\begin{enumerate}
			\item $\ker_\mathcal{F} A_c = \im_\mathcal{F} S$,
			\item $R^k/\im A_c^\top$ is either $(0)$-primary or trivial,
			\item the prime $(0)$ is not an associated prime of $R^k/\im A_u^\top$,
			\item $\im_R A^\top = \im_R A_c^\top \cap \im_R A_u^\top$ as $R$-modules,
			\item $\ker_R A_u = 0$, i.e. $\im_R A_u$ is a free $R$-module,
			\item $\Ass(R^k/\im A^\top) = \Ass(R^k/\im A_c^\top) \cup \Ass(R^k/\im A_u^\top)$, and the union is disjoint.
		\end{enumerate}
		In particular, the system $\ker_\mathcal{F} A_c$ is controllable, and $\ker_\mathcal{F} A_u$ is uncontrollable.
	\end{proposition}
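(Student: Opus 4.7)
The plan is to apply primary decomposition to the submodule $M = \im_R A^\top \subseteq R^k$ and regroup the primary components according to whether their associated prime is $(0)$. Since $R$ is Noetherian there is a minimal primary decomposition $M = \bigcap_{i=1}^{s} M_i$ with $R^k/M_i$ being $P_i$-primary for the associated primes $P_1, \dots, P_s$ of $R^k/M$. At most one of the $P_i$ equals $(0)$, so we partition the indices into $I_c = \{i : P_i = (0)\}$ (of size zero or one) and $I_u = \{i : P_i \neq (0)\}$, and define $M_c = \bigcap_{i \in I_c} M_i$ and $M_u = \bigcap_{i \in I_u} M_i$, with the convention that the empty intersection is $R^k$. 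We then choose matrices $A_c$ and $A_u$ whose rows generate $M_c$ and $M_u$ respectively, and let $S$ be the syzygy matrix of $A_c$.

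Most of the six conditions are then immediate bookkeeping. Condition (4), $\im_R A^\top = \im_R A_c^\top \cap \im_R A_u^\top$, is just the regrouped decomposition $M = M_c \cap M_u$. For (2), (3), and (6) one checks that $\Ass(R^k/M_c) = \{(0)\}$ if $I_c$ is nonempty (and otherwise $R^k/M_c = 0$), while $\Ass(R^k/M_u) = \{P_i : i \in I_u\}$, which omits $(0)$ by construction. Condition (1) is then a direct application of \Cref{lem:controllable} to $A_c$, since $R^k/\im_R A_c^\top$ is either $(0)$-primary, hence torsion-free, or zero.

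For (5), I would use that (3) is equivalent, by \Cref{lem:torsion}, to $R^k/M_u$ being torsion, hence $M_u$ has full rank $k$ as an $R$-submodule of $R^k$. Any $v \in \ker_R A_u$ pairs to zero with every row of $A_u$, hence with every element of $M_u$, so after localising at $(0)$ the element $v$ pairs to zero with all of $M_u \otimes_R \mathrm{Frac}(R) = \mathrm{Frac}(R)^k$, which forces $v = 0$ since $R^k$ is torsion-free. Finally, the main identity $\ker_\mathcal{F} A = \im_\mathcal{F} S + \ker_\mathcal{F} A_u$ follows by applying the injective cogenerator duality to $M = M_c \cap M_u$: this turns intersections of submodules of $R^k$ into sums of solution spaces, as recorded after \eqref{eq:prim_dec}, and combining with (1) delivers the statement. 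The only place where anything beyond formal bookkeeping is needed is the rank-and-localisation step underlying (5); everything else is a careful regrouping of the primary decomposition data.
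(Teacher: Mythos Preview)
Your proof is correct and follows the same overall strategy as the paper: regroup the primary decomposition of $\im_R A^\top$ into the $(0)$-primary component and the rest, choose matrices $A_c, A_u$ accordingly, take $S$ to be the syzygy matrix of $A_c$, and read off properties (1)--(4) and (6) from the construction together with \Cref{lem:controllable} and \Cref{lem:torsion}.

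The one genuine difference is your argument for property (5). The paper argues by contradiction via the Paley--Wiener theorem: if $\ker_R A_u$ were nonzero, a nonzero syzygy column $S_u$ would produce nonzero compactly supported solutions $S_u w \in \ker_{\mathcal F} A_u$, contradicting the fact (cited from \cite{shankar_notes}) that uncontrollable systems admit no such solutions. Your route is purely algebraic: since $R^k/M_u$ is torsion, $M_u \otimes_R \mathrm{Frac}(R) = \mathrm{Frac}(R)^k$, and any $v \in \ker_R A_u$ is orthogonal to $M_u$ under the standard bilinear pairing, hence to all of $\mathrm{Frac}(R)^k$ after localisation, forcing $v=0$. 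Both arguments are short; yours has the advantage of staying entirely within commutative algebra and not invoking an external analytic result, while the paper's version ties the statement back to the analytic meaning of uncontrollability. Interestingly, the paper itself later uses essentially your localisation idea in the proof of \Cref{thm:kerA=kerA_c}, so your approach is very much in the spirit of the surrounding material.
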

	\begin{proof}
		Our point of departure is the primary decomposition \eqref{eq:prim_dec}.
		We write
		\begin{align*}
			\im A^\top = M_c \cap M_u,
		\end{align*}
		where $M_c \subseteq R^k$ is the $(0)$-primary component, and $M_u \subseteq R^k$ is the intersection of all other primary components.
		If there are no $(0)$-primary components (or if $\im A^\top$ is $(0)$-primary), then $M_c$ (or $M_u$) is equal to $R^k$.
		Let $A^\top_c, A^\top_u$ be polynomial matrices such that $M_c = \im A^\top_c$ and $M_u = \im A_u^\top$.
		Since $\mathcal{F}$ is an injective cogenerator, we have the decomposition
		\begin{align*}
			\ker_\mathcal{F} A = \ker_\mathcal{F} A_c + \ker_\mathcal{F} A_u.
		\end{align*}
		If we choose $S$ to be the syzygy matrix of $A_c$, we obtain the required decomposition.
		Properties 1, 2, 3, 4, 6 follow by construction.
		
		It follows from 2.\ and \Cref{lem:controllable} that  $\ker_\mathcal{F} A_c$ is controllable, and from 3.\ and \Cref{lem:torsion} that $\ker_\mathcal{F} A_u$ is uncontrollable.
		
		For property 5.\ suppose that $\ker_R A_u = \im_R S_u$ for some nonzero matrix $S_u$.
		Then for any nonzero compactly supported $w$ the function $S_u w$ is a nonzero compactly supported solution in $\ker_\mathcal{F} A_u$.
		This is a contradiction, as it follows from the Paley-Wiener Theorem that uncontrollable systems don't contain compactly supported solutions, c.\,f.\ \cite[Prop~3.2]{shankar_notes}.
	\end{proof}
	
	We remark that in the construction above, we chose $S$ to be the syzygy matrix of $A_c$, but in fact it coincides with the syzygy matrix of $A$ itself.
	
	\begin{theorem}\label{thm:kerA=kerA_c}
		For $A$ and  $A_c$ as in Proposition \ref{prop:decomp}
		\begin{align*}
			\ker_R A = \ker_R A_c.
		\end{align*}
	\end{theorem}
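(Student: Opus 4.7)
The plan is to reinterpret both kernels as $\Hom$-modules, reducing the equality of kernels to a statement about modules. The starting observation is that $\ker_R A$ can be identified with $\Hom_R(R^k/\im A^\top, R)$ via the pairing $v \mapsto (w \mapsto v \cdot w)$: the condition $Av = 0$ is exactly the condition that this functional vanishes on the rows of $A$, i.e., on $\im A^\top$. The analogous identification holds for $A_c$. Writing $M = \im A^\top$, $M_c = \im A_c^\top$, $M_u = \im A_u^\top$, property 4 of \Cref{prop:decomp} gives $M = M_c \cap M_u$, and in particular $M \subseteq M_c$. The inclusion $\ker_R A_c \subseteq \ker_R A$ is then automatic (as $Av=0$ is implied by $A_cv=0$ after writing each row of $A$ as an $R$-combination of rows of $A_c$), so the task reduces to showing $\ker_R A \subseteq \ker_R A_c$.

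The key module-theoretic step is to show that the quotient $M_c/M$ is a torsion $R$-module. This will follow immediately from the chain $M_c/M = M_c/(M_c \cap M_u) \hookrightarrow R^k/M_u$, combined with property 3 of \Cref{prop:decomp} and \Cref{lem:torsion}, which together say that $R^k/M_u$ is torsion.

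Then I would apply $\Hom_R(-, R)$ to the short exact sequence $0 \to M_c/M \to R^k/M \to R^k/M_c \to 0$. Since $R$ is a domain, $\Hom_R(T, R) = 0$ for any torsion $R$-module $T$, so $\Hom_R(M_c/M, R) = 0$, and left-exactness of $\Hom$ yields an isomorphism $\Hom_R(R^k/M_c, R) \cong \Hom_R(R^k/M, R)$, which translates into $\ker_R A_c = \ker_R A$. I do not anticipate a serious obstacle here; the whole argument rests on the single observation that $M_c/M$ is torsion, which is a direct consequence of the primary decomposition used to construct $A_c$ and $A_u$. A fully elementary alternative is available as well: for $v \in \ker_R A$ and $w \in M_c$, torsion of $M_c/M$ yields $r \in R \setminus \{0\}$ with $rw \in M$, whence $r(v \cdot w) = v \cdot (rw) = 0$ and $v \cdot w = 0$ follows from $R$ being a domain.
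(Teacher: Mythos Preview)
Your proof is correct. Both your argument and the paper's rest on the same observation---that $R^k/\im A_u^\top$ is torsion (equivalently, $(0)\notin\Ass(R^k/\im A_u^\top)$)---but they exploit it through different mechanisms. The paper localizes at $(0)$: torsion of $R^k/\im A_u^\top$ gives $(\im A_u^\top)_{(0)}=R^k_{(0)}$, hence $(\im A^\top)_{(0)}=(\im A_c^\top)_{(0)}$, and then explicit matrix factorizations (one over $R$, one obtained by clearing denominators from a factorization over $R_{(0)}$) yield the two inclusions of kernels. Your route is more homological: you identify $\ker_R A$ with $\Hom_R(R^k/\im A^\top,R)$, observe that $M_c/M$ embeds in the torsion module $R^k/M_u$, and conclude via $\Hom_R(-,R)$ vanishing on torsion modules. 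Your approach is conceptually cleaner and avoids matrix manipulations; the paper's is more concrete and produces the factorization $A=B^\top A_c$ explicitly, which can be useful in its own right. Your ``elementary alternative'' at the end is essentially the paper's denominator-clearing step, carried out one element of $M_c$ at a time rather than uniformly via a single $g$ for all columns of $A_c^\top$.
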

	\begin{proof}
		Let $\loc{\cdot}$ denote the localization at the prime $(0)$, and recall that $R_{(0)} = \FF(x_1, \ldots, x_n)$ is the field of rational functions. As a first step we show that $\loc{\im_R A^\top_u}=R^k_{(0)}$, where $A_u$ denotes the uncontrollable part in a decomposition as in Proposition \ref{prop:decomp}. For any $v \in R^k$ the ideal $(\im_R A_u^\top : v)$ is nonzero, as $(0)$ is not an associated prime.  Hence there is a nonzero $r \in R$ such that $rv \in \im_R A_u^\top$. When we localize at $(0)$ the element $r$ becomes invertible, so $v=r^{-1}rv \in \im_R A_u^\top$. It follows that $\loc{\im_R A^\top_u}=R^k_{(0)}$. As $\im_R A^\top = \im_R A^\top_c \cap \im_R A^\top_u$ we have $\loc{\im_R A^\top} = \loc{\im_R A_c^\top}$. In the special case when $A$ in uncontrollable we get $\loc{\im_R A^\top} = R^k_{(0)}$.
		
		Since $\im_R A^\top \subseteq \im_R A_c^\top$, there is a polynomial matrix $B$ such that $A^\top = A_c^\top B$. Suppose $u \in \ker_R A_c$, then $Au = B^\top A_c u = 0$, so $u \in \ker_R A$.
		
		For the converse, since $\loc{\im_R A_c^\top} \subseteq \loc{\im_R A^\top}$, there is some matrix $C$ with entries in $R_{(0)}$ such that $A_c^\top = A^\top C$.
		Clearing denominators, we have $gA_c^\top = A^\top C'$ for some $0 \neq g \in R$ and a matrix $C'$ with entries in $R$.
		If $u \in \ker_R A$, then $g(A_c u) = {C'}^\top A^\top u = 0$, and since $g \neq 0$, we must have $A_c u = 0$.
		Hence $\ker_R A_c = \ker_R A$.
	\end{proof}

	While the controllable part is well understood as the image of the vector potential $S$, the uncontrollable part is less explored. Our Theorem~\ref{thm:main_eval} gives the following characterization of uncontrollable operators:
	\begin{corollary}\label{cor:uncontrollable}
		Let $A\in R^{\ell \times k}$ be a polynomial matrix and $S$ its syzygy matrix. Let $\mathcal F$ be an injective cogenerator, for instance $C^\infty(\R^n)$ or $\mathcal D'(\R^n)$. The following are equivalent:
		\begin{enumerate}
			\item $\ker_\mathcal{F} A$ is uncontrollable, i.e. $R^k/\im A^\top$ is torsion,
			\item $\ker_R A=\{0\}$, i.e. $S=0$,
			\item $\ker_\C A(\xi)=\{0\}$ for all $\xi\in\R^n$, except on a proper real variety,
			\item $\ker_\C A(\xi)=\{0\}$ for all $\xi\in \C^n$, except on a proper complex variety.
		\end{enumerate}
		The proper variety in each case is $\{\xi\in\mathbb{F}^n\colon \rank A(\xi)\text{ is not maximal}\}$, $\mathbb F\in\{\R,\C\}$.
	\end{corollary}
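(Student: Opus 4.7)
The plan is to close the cycle $1 \Leftrightarrow 2 \Rightarrow 4 \Rightarrow 3 \Rightarrow 2$, with \Cref{thm:main_eval} serving as the bridge between the algebraic conditions (1, 2) and the pointwise conditions (3, 4). Two ingredients will drive the argument: (i) both 1 and 2 are equivalent to $A$ having full column rank $k$ over the fraction field $R_{(0)} = \C(x_1,\ldots,x_n)$, and (ii) a nonzero polynomial with complex coefficients cannot vanish identically on $\R^n$ (separate real and imaginary parts, which are real polynomials, and apply the classical fact over $\R$).

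For $1 \Leftrightarrow 2$, I would localize at the prime $(0)$. The quotient $R^k/\im A^\top$ is torsion iff $\loc{R^k/\im A^\top} = 0$, iff $\loc{\im A^\top} = R^k_{(0)}$, iff $\rank_{R_{(0)}} A = k$. On the other hand, $\ker_R A = 0$ says the columns of $A$ are $R$-linearly independent; clearing denominators shows this is equivalent to $R_{(0)}$-linear independence, i.e., $\rank_{R_{(0)}} A = k$ again. Since $\ker_R A = \im_R S$, the condition $\ker_R A = 0$ is exactly $S = 0$, which gives the parenthetical ``$S = 0$'' in 2.

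For $2 \Rightarrow 4$, assuming $S = 0$, \Cref{thm:main_eval} yields $\ker_\C A(\xi) = \im_\C S(\xi) = \{0\}$ at every $\xi \in \C^n$ where $\rank_\C A(\xi) = \rank_{\C[x]} A =: r$, and the rank-drop locus is cut out by the vanishing of all $r \times r$ minors of $A$, a proper complex subvariety (by definition of $r$ some such minor is a nonzero polynomial). This also identifies the exceptional set as the one named at the end of the corollary. For $4 \Rightarrow 3$ I would observe that this same complex rank-drop variety cannot contain $\R^n$ by (ii), so its intersection with $\R^n$ is a proper real variety. For $3 \Rightarrow 2$ I would use that $\im_\C S(\xi) \subseteq \ker_\C A(\xi)$ always holds, so 3 forces $S(\xi) = 0$ on a Zariski-dense subset of $\R^n$; each entry of $S$ is then a complex polynomial vanishing on a dense real subset, hence zero by (ii). The step I expect to require the most care is precisely this last one: without ingredient (ii), a real-pointwise vanishing statement could in principle fail to propagate to the polynomial $S$, so the whole equivalence between conditions 3 and the algebraic conditions hinges on the observation that complex polynomials are detected by their values on $\R^n$.
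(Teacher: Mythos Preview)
Your proof is correct. The treatment of the equivalences $2 \Leftrightarrow 3 \Leftrightarrow 4$ matches the paper's: both invoke \Cref{thm:main_eval} (with $S=0$) together with the fact that a nonzero complex polynomial cannot vanish on all of $\R^n$, which is exactly your ingredient (ii) and what the paper phrases as ``$\R^n$ is not a subvariety of $\C^n$''. Your $3 \Rightarrow 2$ via $\im_\C S(\xi)\subseteq\ker_\C A(\xi)$ is a valid variant; one could also shortcut by noting that a single real $\xi$ with $\ker_\C A(\xi)=\{0\}$ already forces $\rank_{\C[x]}A=k$.

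The genuine difference is in $1 \Leftrightarrow 2$. The paper argues analytically: if $S\neq 0$ then $Su$ produces nonzero compactly supported solutions, contradicting (via Paley--Wiener, as used in \Cref{prop:decomp}) the uncontrollability of $\ker_{\mathcal F}A$; conversely, if $\ker_R A=\{0\}$ the controllable--uncontrollable decomposition collapses to $\ker_{\mathcal F}A=\ker_{\mathcal F}A_u$. Your route is purely algebraic: localize at $(0)$ and observe that both conditions are equivalent to $\rank_{R_{(0)}}A=k$. This is more self-contained and avoids importing the PDE machinery (Paley--Wiener, \Cref{prop:decomp}), at the cost of not tying the result back to the decomposition that motivates the rest of the section. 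Either approach is fine; yours is the more elementary of the two.
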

	\begin{proof}
		Suppose $\ker_R A = \im_R S \neq \{0\}$. If $u$ is any compactly supported function, then $v = Su$ is a compactly supported solution to $Av = 0$, so in particular $\ker_\mathcal{F} A$ is not uncontrollable.
		If $\ker_R A = \{0\}$, we can apply the Controllable-Uncontrollable decomposition to get $\ker_\mathcal{F} A = 0 + \ker_\mathcal{F} A_u$, so in particular $\ker_\mathcal{F} A$ is uncontrollable. This proves the equivalence between 1.\ and 2.
		
		The equivalence of statements 2., 3., and 4.\ follow from \Cref{thm:main_eval} and the fact that $\RR^n$ is not a subvariety of $\CC^n$.
	\end{proof}
	Therefore, the triviality of the syzygy matrix characterizes uncontrollability. In contrast, there is no condition on the syzygy matrix alone that can characterize controllability. This follows from Theorem \ref{thm:kerA=kerA_c}, by taking an operator $A$ that is not controllable and noticing that $\im_R S$ is then the kernel of both an operator $A_c$ that is controllable and of $A$, which is not.
	We summarize the various alternative definitions of controllability and uncontrollability in \Cref{table}.

	\begin{table}[h]
		\centering
		\begin{tabular}{|c |c |c| c| c|} 
			\hline 
			& controllable & uncontrollable\\ 
			\hline\hline
			\rule{0pt}{2.5ex} Torsion elements & $R^k/\im A^\top$ is torsion-free &  $R^k/\im A^\top$ is torsion \\[0.5ex] \hline
			\rule{0pt}{2.5ex} Associated primes & $R^k/\im A^\top$ is $(0)$-primary & $(0)\notin \mathrm{Ass}(R^k/\im A^\top$)  \\ [0.5ex]\hline
			\rule{0pt}{2.5ex}
			Vector potential & $\ker_{C^\infty(\R^n)}A= \im_{C^\infty(\R^n)}S$ & $\ker_{C_c^\infty(\R^n)}A= \{0\}$    \\[0.5ex] \hline
			\rule{0pt}{2.5ex} Syzygy matrix & \textbf{no} condition  & $S=0$   \\ [0.5ex]
			\hline
		\end{tabular}
		\caption{ \label{table} A summary of the equivalent definitions of controllable and uncontrollable operators.}
	\end{table}

	In the setting of \Cref{cor:uncontrollable}, the nature of the solutions of the PDE $Av=0$ can be very different, depending on the structure of the set of points where $\ker_{\CC}A(\xi) \ne \{0\}$.
	\begin{example}
		Consider the examples $A_2$, $A_3$, $A_7$ from Subsection~\ref{ssec:operators}. All three operators are uncontrollable with 
		$$
		\ker_R A_i = \{0\},\quad \ker_{C_c^\infty}A_i=\{0\},\quad\ker_{C^\infty}A_i\neq\{0\}.
		$$
		In each example we investigate the latter set.
		We also look at the varieties $X_\RR$, resp. $X_\CC$ where conditions 3.,\, resp. 4.\ of \Cref{cor:uncontrollable} fail.
		
		If $A=A_2=\partial_1^2-\partial_2^2$, then any function
		$v(z_1,z_2)=f(z_1\pm z_2)$ for $f\in C^\infty(\R)$ is a solution. The operator is not $\R$-elliptic.
		The varieties $X_\RR, X_\CC$ are both pairs of lines.
		
		If $A=A_3=\partial_1^2+\partial_2^2$, the solutions are of the form $v(z_1,z_2)=g(z_1\pm\imag z_2)$, where $g\in C^\infty(\C)$. The increase in regularity is substantial, particularly since, in this case, the solutions are real analytic. The operator is $\R$-elliptic, but not $\C$-elliptic.
		The variety $X_\CC$ is again a pair of lines, but now $X_\RR$ is the origin.
		
		If $A=A_7=(A_2,A_3)^\top$, we have that the solutions are $v(z_1,z_2)=az_1z_2+bz_1+cz_2+d$, which are polynomials. This is yet another increase in regularity from being analytic. The operator is $\C$-elliptic.
		Here both $X_\CC$ and $X_\RR$ are the origin.
	\end{example}
	In practice, many $\RR$-constant rank operators happen to be also $\CC$-constant rank, so the ellipticity of $A_u$ follows from the complex part of  \Cref{thm:intro_real_elliptic}.
	If $A$ is controllable, then the conclusion of \Cref{thm:intro_real_elliptic} is also trivial, as one can choose $A_u = 1$.
	We present a concrete example where the real part of \Cref{thm:intro_real_elliptic} applies nontrivially. 
	
	\begin{example}
		Let $$A = \begin{pmatrix}
			x(x^2+y^2) & y(x^2+y^2)
		\end{pmatrix}.$$
		The operator drops rank when $x^2 + y^2 = 0$, hence it has $\RR$-constant rank, but not $\CC$-constant rank.
		It is not $\RR$-elliptic either, nor is it controllable, as we have $\Ass(R^2/\im A^\top) = \{(0), (x^2+y^2)\}$.
		The controllable part is given by the operator $A_c = \begin{pmatrix}x & y\end{pmatrix}$, so that $S = \begin{pmatrix}y & -x\end{pmatrix}^\top$.
		The uncontrollable part corresponds to the operator
		\begin{align*}
			A_u = \begin{pmatrix}
				x(x^2+y^2) & -y(x^2+y^2) \\
				y(x^2+y^2) & x(x^2+y^2)
			\end{pmatrix},
		\end{align*}
		whose determinant is $(x^2+y^2)^3$, so $A_u$ is indeed $\RR$-elliptic.
	\end{example}
	By replacing the Laplacian $x^2+y^2$ with the wave operator $x^2-y^2$ in the example above, we obtain an example in which  $A_u$ is not elliptic:
	\begin{example}
		Let
		\begin{align*}
			A = \begin{pmatrix}
				x(x^2-y^2) & y(x^2-y^2)
			\end{pmatrix}
		\end{align*}
		Its rank drops whenever $x = \pm y$, so it does not have $\RR$-constant rank.
		The uncontrollable part is described by the operator
		\begin{align*}
			A_u = \begin{pmatrix}
				x(x^2-y^2) & y(x^2-y^2) \\ y(x^2 - y^2) & x(x^2 - y^2),
			\end{pmatrix}
		\end{align*}
		whose solutions take the form
		\begin{align*}
			v(a,b) = \begin{pmatrix}
				f_1(a+b) - af_3(a+b) + f_4(a-b) + af_6(a-b) \\ f_2(a+b) + af_3(a+b) + f_5(a-b) + af_6(a-b)
			\end{pmatrix},
		\end{align*}
		where $f_1,\dotsc,f_6 \in C^\infty(\RR, \CC)$.
	\end{example}

	\section{Generic Rank and Associated Primes}\label{sec:C-CR}
	As rank and ellipticity conditions require homogeneity by definition,  Assumption~\ref{ass:hom} is implicit whenever $\mathbb F$-ellipticity/constant rank is mentioned.
	Let $A_u$ denote the uncontrollable component of a decomposition of a given operator $A \in R^{\ell \times k}$, as in Proposition \ref{prop:decomp}. Moreover, we let $M$ denote the module $R^k /\im A^\top$. 
	
	The aim of this section is to prove Theorem \ref{thm:CR-elliptic} which contains the main Theorems~\ref{thm:intro_real_elliptic} and~\ref{thm:main}.

	\begin{theorem}\label{thm:CR-elliptic}
		If $A(\xi)$ has maximal rank for a point $\xi \in \CC^n$, then $\ker_\CC A_u(\xi) = \{0\}$. 
		In particular, if  
		$A$ has $\FF$-constant rank, then $A_u$ is $\FF$-elliptic, for $\FF \in \{\RR, \CC \}$.
	\end{theorem}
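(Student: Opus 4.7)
The plan is to translate the pointwise condition $\ker_\CC A_u(\xi) = \{0\}$ into a support-theoretic statement about $M_u = R^k/\im A_u^\top$, and then attack the latter with Fitting ideals of $M = R^k/\im A^\top$. First, $\ker_\CC A_u(\xi) = \{0\}$ is equivalent to surjectivity of $A_u^\top(\xi) \colon \CC^{\ell_u} \to \CC^k$, hence (by Nakayama) to $(M_u)_{\mathfrak{m}_\xi} = 0$, where $\mathfrak{m}_\xi$ is the maximal ideal at $\xi$. This is in turn equivalent to $\mathfrak{m}_\xi \notin \mathrm{Supp}(M_u) = \bigcup_{P \in \Ass(M_u)} V(P)$. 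By property~6 of Proposition~\ref{prop:decomp} together with properties~2 and~3, $\Ass(M_u) = \Ass(M) \setminus \{(0)\}$, so the theorem reduces to showing that every nonzero associated prime $P$ of $M$ contains $I_r(A)$, the ideal of $r \times r$ minors of $A$, where $r = \rank_R A$ (whose vanishing locus is the rank-drop locus of $A$).

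To set up the Fitting ideal machinery, I would recall (as in the proof of Theorem~\ref{thm:kerA=kerA_c}) that $\im A^\top$ and $\im A_c^\top$ coincide after localizing at $(0)$, so $\rank_R A_c = r$ as well, and every $(r+1)\times(r+1)$ minor of $A$ vanishes identically as a polynomial. In the language of Fitting ideals of $M$ this reads
\[
\Fitt_{k-r-1}(M) = I_{r+1}(A) = (0), \qquad \Fitt_{k-r}(M) = I_r(A).
\]
Hence for any prime $P$ with $I_r(A) \not\subseteq P$, both $\Fitt_{k-r-1}(M)_P = 0$ and $\Fitt_{k-r}(M)_P = R_P$, and the Fitting criterion for local freeness yields that $M_P$ is free of rank $k - r$ over $R_P$, in particular torsion-free.

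The key step is then almost immediate: if $P$ were an associated prime of $M$ with $P \neq (0)$, then $M_P$ would contain a nonzero element whose annihilator is $PR_P \neq 0$, so $M_P$ would fail to be torsion-free. By the previous paragraph this forces $I_r(A) \subseteq P$, i.e.\ $V(P) \subseteq V(I_r(A))$. I view this Fitting-freeness dichotomy as the main commutative-algebra input; the rest is formal, though one has to pay a little attention to the edge case $r = k$ (where only the top Fitting ideal $\Fitt_0(M) = I_k(A)$ is relevant and local freeness of rank $0$ already means $M_P = 0$).

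Finally, if $\rank A(\xi) = r$ then $I_r(A) \not\subseteq \mathfrak{m}_\xi$, so $\mathfrak{m}_\xi$ contains no nonzero associated prime of $M$; by the initial reduction $(M_u)_{\mathfrak{m}_\xi} = 0$, whence $\ker_\CC A_u(\xi) = \{0\}$. The ``in particular'' ellipticity statement is then immediate: if $A$ has $\FF$-constant rank, every $0 \neq \xi \in \FF^n$ is a maximal-rank point of $A$, so $\ker_\CC A_u(\xi) = \{0\}$ at every such $\xi$, which is precisely $\FF$-ellipticity of $A_u$.
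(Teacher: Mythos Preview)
Your proof is correct and arrives at the same intermediate statement as the paper---every nonzero associated prime $P$ of $M=R^k/\im A^\top$ satisfies $I_r(A)\subseteq P$ (this is the paper's Theorem~\ref{thm:CR-ass})---but you reach it by a genuinely different and somewhat more elementary route. The paper takes a free resolution of $M$ and invokes two facts from \cite{eisenbud:commalg}: that $P\in\Ass M$ iff $P\supseteq I(\phi_d)$ with $d=\depth(P)$ (Lemma~\ref{lem:exact_seq_ass}), and that the radicals $\sqrt{I(\phi_i)}$ increase along the resolution (Lemma~\ref{lem:exact_seq_radical}); combining these for $d\geq 1$ gives $P\supseteq\sqrt{I(\phi_1)}=\sqrt{I_r(A)}$. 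You instead stay at the level of the presentation and use the Fitting criterion for local freeness: since $\Fitt_{k-r-1}(M)=0$ and $\Fitt_{k-r}(M)=I_r(A)$, any prime $P$ not containing $I_r(A)$ makes $M_P$ free, hence torsion-free, which rules out $P$ being a nonzero associated prime. Your reduction of $\ker_\CC A_u(\xi)=\{0\}$ to $\mathfrak m_\xi\notin\mathrm{Supp}(M_u)$ via Nakayama is likewise a clean replacement for the paper's use of the characteristic variety (Proposition~\ref{prop:c-elliptic_ass_primes} and Corollary~\ref{cor:Au_ellptic}). The upshot: your argument avoids free resolutions and depth altogether, at the cost of quoting the local-freeness criterion; the paper's resolution-based lemmas, on the other hand, are reused verbatim in the proof of Theorem~\ref{thm:pointwise_exact}, so there is some economy in their choice. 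One small remark: your detour through $\rank_R A_c=r$ is unnecessary---the vanishing of the $(r+1)\times(r+1)$ minors of $A$ is already the definition of $r=\rank_R A$.
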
%
	Here we clarify that the rank of an evaluation $A(\xi)$ is maximal if $\rank A(\xi)$ equals the generic rank, i.\,e.\ the maximal value of the map $\xi\mapsto \rank A(\xi)$. We emphasize that the first part of the result, concerning the the point evaluations $A(\xi)$ and $A_u(\xi)$, holds without any homogeneity restrictions on $A$. 
	
	One important observation when it comes to real constant rank is that if $A$ has $\RR$-constant rank then this rank is equal to the rank of $A$ at a generic \emph{complex} point, i.\,e.\ the maximal rank of $A$. Indeed, if the rank of $A$ were to drop for all of $\RR^n$, it would also have to drop for all points in the (complex) Zariski closure of $\RR^n$, namely all of $\CC^n$.
	
	The key to proving Theorem \ref{thm:CR-elliptic} is the following result, which we prove later in this section. We use the notation $V(I)$ for the complex algebraic variety of an ideal $I \subseteq R$, i.\,e.\ the set of common zeroes of the polynomials in $I$. 
	
	\begin{theorem}\label{thm:CR-ass}
		Let $P$ be a nonzero associated prime of $M=R^k /\im_R A^\top$. Then 
		$$ \{ \xi\in \CC^n : \ \rank A(\xi) \mbox{ is maximal} \} \cap V(P) = \emptyset.$$ 
		In particular, if $A$ has $\RR$-constant rank then the variety $V(P)$ contains no real nonzero points. If $A$ has $\CC$-constant rank, then $\Ass M \subseteq \{(0),(x_1, \ldots, x_n)\}$.
	\end{theorem}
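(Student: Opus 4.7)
My plan is to reduce the main statement to the purely algebraic containment: for every nonzero associated prime $P$ of $M = R^k/\im_R A^\top$, the ideal $I_r(A)$ of $r\times r$ minors of $A$ is contained in $P$, where $r := \rank_R A$ is the generic rank. Since $\{\xi\in\CC^n : \rank A(\xi)\text{ is maximal}\} = \CC^n\setminus V(I_r(A))$, this containment gives $V(P)\subseteq V(I_r(A))$ and hence the desired disjointness.

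The key tool I would use is the Fitting ideal criterion for local freeness. Presenting $M$ as the cokernel of $A^\top : R^\ell \to R^k$, I get $\Fitt_{k-r}(M) = I_r(A)$, while $\Fitt_{k-r-1}(M) = 0$, since the $(r+1)\times(r+1)$ minors of $A$ vanish identically by the very definition of the generic rank. Now suppose, for contradiction, that $I_r(A)\not\subseteq P$. Then $\Fitt_{k-r}(M)_P = R_P$ and $\Fitt_{k-r-1}(M)_P = 0$, so the standard local-freeness criterion forces $M_P$ to be free of rank $k-r$ over $R_P$.

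Because $R$ is a polynomial ring, hence a domain, the localization $R_P$ is also a domain, and a nonzero free module over a domain has $\{(0)\}$ as its only associated prime. However, $P \in \Ass_R M$ is equivalent to $PR_P \in \Ass_{R_P} M_P$, which would force $PR_P = 0$ in $R_P$; using once more that $R$ is a domain, this gives $P = (0)$, contradicting the hypothesis. Hence $I_r(A)\subseteq P$, proving the main claim. The $\RR$-constant rank conclusion follows because Zariski density of $\RR^n\setminus\{0\}$ in $\CC^n$ forces the constant real rank to equal $r$, so $V(I_r(A))\cap(\RR^n\setminus\{0\}) = \emptyset$. The $\CC$-constant rank statement follows since $V(I_r(A))\subseteq\{0\}$ gives $V(P)\subseteq\{0\}$, and by the Nullstellensatz and primality any nonzero such prime must coincide with the irrelevant ideal $(x_1,\ldots,x_n)$.

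I expect the main delicate point to be the bookkeeping for Fitting ideals under the cokernel presentation of $A^\top$, in particular checking the correct shift between the indexing of Fitting ideals and the sizes of the minors of $A$ versus $A^\top$, and stating the local-freeness criterion in the precise form needed. Once these are in place, the argument is a single, essentially mechanical application of that criterion combined with the standard behaviour of associated primes under localization.
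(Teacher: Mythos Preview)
Your argument is correct and reaches the same conclusion $I_r(A)\subseteq P$ (equivalently $V(P)\subseteq V(I_r(A))$) as the paper, but by a genuinely different route. The paper takes a minimal free resolution $0\to R^{k_m}\xrightarrow{\phi_m}\cdots\to R^{k_1}\xrightarrow{\phi_1=A^\top}R^{k_0}\to M\to 0$ and combines two facts from Eisenbud: first, that a prime $P$ of depth $d$ is associated to $M$ if and only if $P\supseteq I(\phi_d)$; second, that the radicals $\sqrt{I(\phi_j)}$ increase along the resolution, so $P\supseteq\sqrt{I(\phi_d)}\supseteq\sqrt{I(\phi_1)}=\sqrt{I_r(A)}$, whence $V(P)\subseteq V(I_r(A))$. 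Your proof avoids both the full resolution and the depth argument entirely: you work only with the presentation $R^\ell\xrightarrow{A^\top}R^k\to M\to 0$, observe that $\Fitt_{k-r}(M)=I_r(A)$ and $\Fitt_{k-r-1}(M)=0$, and then invoke the Fitting criterion for local freeness (Eisenbud, Prop.~20.8) to conclude that if $I_r(A)\not\subseteq P$ then $M_P$ is free over the domain $R_P$, contradicting $PR_P\in\Ass_{R_P}M_P$ for $P\neq(0)$. Your approach is arguably more self-contained and needs less machinery; the paper's approach, on the other hand, situates the result within the general theory of how associated primes are read off a free resolution, and the auxiliary lemma it uses in fact characterizes \emph{all} associated primes of $M$, not just the nonzero ones. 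The bookkeeping you flag (minors of $A$ versus $A^\top$, the shift $\Fitt_j(M)=I_{k-j}(A^\top)=I_{k-j}(A)$, and compatibility of Fitting ideals with localization) is indeed the only place to be careful, and you have it right.
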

	
	The converse implications of the two theorems are not true, as can be seen from the Euler equations as expressed in \cite{DLSZ}, presented below in two space dimensions.
	\begin{example}\label{exa:Euler}
		Let
		$$
		A=\begin{pmatrix}
			x_1 & 0 & x_2 & x_3 & x_2 \\
			0 & x_1 & -x_3 & x_2 & x_3 \\
			x_2 & x_3 & 0 & 0 & 0 
		\end{pmatrix}
		$$
		This is a controllable operator, and we can simply take $A_u$ to be multiplication by $1$ which is trivially $\C$-elliptic. The generic rank of $A$ is 3, but it drops to 2 when $x_2=x_3=0$, so $A$ does not have $\R$-constant rank. Moreover the only associated prime of the module $M$ is $(0)$. 
		
		For an example where the uncontrollable part is elliptic and nontrivial, consider the $9 \times 5$ matrix $B$ obtained from $A$ by multiplying each row by $x_1$, $x_2$, and $x_3$. Then $B$ has the same rank as $A$ in every point, and the associated primes are $(0)$ and $(x_1,x_2,x_3)$. The uncontrollable part $B_u$ is given by a $\C$-elliptic $24\times5$ matrix with entries of degree two. Its  null space over $C^\infty$ must therefore contain affine functions.
	\end{example}    
	
	Clearly any polynomial matrix $A$ has constant rank if and only if $A^\top$ does. If we consider $A^\top$ instead of $A$ in Example \ref{exa:Euler} we get the ideal $(x_2,x_3)$ as an associated prime, which describes exactly the points where the rank of $A^\top$ (and hence also $A$) drops. One might be tempted to believe that if the rank of a matrix $A$ drops from the generic rank at a point $\xi$, then $\xi$ is in the variety of an associated prime of the module defined by $A$ or $A^\top$. However this is not the case, as we see in the next example. 
	
	\begin{example}\label{exa:ctrl_not_RCR}
		The operator 
		$$A=
		\begin{pmatrix}
			x_3 &x_3 & 0 & x_3(x_3-x_4) & x_4(x_4-x_2-x_3)+x_2x_3 \\
			x_1 & 0 & x_4(x_2+x_3-x_4)-x_2x_3 &0&0 \\
			0& x_2& 0 & x_4(x_3-x_4) & 0 \\
			0 & 0 & x_2x_3 & x_1x_3 & x_1x_2
		\end{pmatrix}
		$$
		does not have constant rank, but both $R^4/\im_R A$ and $R^5/\im_R A^\top$ are $(0)$-primary. 
	\end{example}
	
	Recall \cite{manssour21} that the \emph{characteristic variety} $V(M)$ is given by 
	$$V(M) = V(\Ann_R M)= V(P_1) \cup \dots \cup V(P_s)$$
	where $\Ass(M)= \{P_1, \ldots, P_s\}$.
	Alternatively, the characteristic variety is the vanishing set of the $k \times k$ minors of $A$. Hence $\xi \in V(M)$ if and only if $A(\xi)$ has non-trivial kernel. In this way we get a characterization of $\CC$-elliptic and $\RR$-elliptic operators as stated in Proposition \ref{prop:c-elliptic_ass_primes} below. This may be compared with the description of $\CC$-elliptic operators given in  \cite[Proposition 3.2]{GRVS}.
	
	\begin{proposition}\label{prop:c-elliptic_ass_primes}
		Let $\Ass(M)= \{P_1, \ldots, P_s\}$. Then 
		$$ V(P_1) \cup \dots \cup V(P_s) = \{ \xi \in \CC^n : \ A(\xi) \ \mbox{has a nontrivial kernel}\}.$$
		In particular, the following are equivalent:
		\begin{enumerate}
			\item 
			$A$ is $\RR$-elliptic, 
			\item the varieties $V(P_i)$ contains no real nonzero points. 
			\item $\ker_{\mathcal{D}'} A$ consists only of real analytic functions. 
		\end{enumerate}
		Moreover, the following are also equivalent:
		\begin{enumerate}
			\item $A$ is $\C$-elliptic,
			\item $M$ is either trivial or $(x_1,\ldots,x_n)$-primary,
			\item $\ker_{\mathcal{D}'} A$ consists only of polynomials.
		\end{enumerate}
	\end{proposition}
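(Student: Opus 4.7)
The first assertion---identifying $\bigcup_i V(P_i)$ with $\{\xi\in\CC^n : \ker_\CC A(\xi)\neq\{0\}\}$---is essentially a bookkeeping step. The paragraph preceding the statement already records that $V(M) = V(\Ann_R M) = V(P_1) \cup \ldots \cup V(P_s)$ and that this same set is cut out by the $k\times k$ minors of $A$ (this uses the result from \cite{manssour21}). Since a linear map $A(\xi)\colon \CC^k \to \CC^\ell$ fails to be injective precisely when all its $k\times k$ minors vanish at $\xi$, the equality is immediate. Everything else in the proposition then reduces to restricting this set equality to $\RR^n\setminus\{0\}$ or to $\CC^n\setminus\{0\}$, respectively, together with invoking classical analytic facts.

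For the $\RR$-ellipticity block, the equivalence (1)$\Leftrightarrow$(2) is immediate from the first assertion restricted to nonzero real $\xi$, since $A$ is $\RR$-elliptic by definition exactly when the right-hand set meets $\RR^n\setminus\{0\}$ trivially. The equivalence (1)$\Leftrightarrow$(3) is the classical elliptic regularity theorem of H\"ormander already cited after Definition of ellipticity in Subsection~\ref{ssec:operators}, so I would simply quote it.

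For the $\C$-ellipticity block, I would prove (1)$\Leftrightarrow$(2) as follows. If $A$ is $\C$-elliptic, then by the first assertion $\bigcup_i V(P_i) \subseteq \{0\}$; if $M$ is nontrivial, then $\Ass(M)$ is nonempty and each $P_i$ satisfies $V(P_i)\subseteq\{0\}$, so by the Nullstellensatz $P_i \supseteq (x_1,\ldots,x_n)$, and since $P_i$ is prime with $(x_1,\ldots,x_n)$ maximal we obtain $P_i = (x_1,\ldots,x_n)$. Thus either $M=0$ or $\Ass(M) = \{(x_1,\ldots,x_n)\}$, i.e.\ $M$ is $(x_1,\ldots,x_n)$-primary. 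The reverse implication is just the forward direction read in the opposite order: if $\Ass(M) \subseteq \{(x_1,\ldots,x_n)\}$ then $\bigcup_i V(P_i) \subseteq \{0\}$, so $A(\xi)$ is injective on $\CC^n\setminus\{0\}$. For (1)$\Leftrightarrow$(3), I would cite the Ehrenpreis--Palamodov fundamental principle (as already used in Subsection~\ref{ssec:modules}): every distributional solution of $Av=0$ is a superposition of exponential-polynomial solutions supported on the characteristic variety $V(M)$; when $V(M)\subseteq\{0\}$ only polynomial contributions remain, and conversely polynomial-only solutions force $V(M)\subseteq\{0\}$.

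The only nontrivial obstacle is being careful with the edge case $M=0$ (i.e.\ the trivial solution space), which explains the ``either trivial or'' phrasing in~(2). The rest is a direct translation between the algebraic language of associated primes and the pointwise rank condition, glued together with two classical analytic theorems quoted as black boxes.
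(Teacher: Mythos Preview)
Your proposal is correct and follows essentially the same route as the paper's proof: the first assertion is reduced to the paragraph preceding the proposition, the $(1)\Leftrightarrow(2)$ equivalences follow by restricting to $\RR^n\setminus\{0\}$ or $\CC^n\setminus\{0\}$, and the $(1)\Leftrightarrow(3)$ equivalences are handled by citing H\"ormander's elliptic regularity and the Ehrenpreis--Palamodov fundamental principle, respectively. The only place the paper is slightly more explicit is in the $\C$-elliptic $(3)\Rightarrow(1)$ direction, where it writes down the concrete nonpolynomial solution $v(z)=u\exp(\langle\xi,z\rangle)$ for $\xi$ a nonzero point of the characteristic variety, rather than appealing to Ehrenpreis--Palamodov in both directions.
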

	That $\R$-ellipticity of $A$  is equivalent to analyticity of all  solutions of $Av=0$ is well known \cite{Hormander}.
	\begin{proof}
		The first statement is deduced in the paragraph before the proposition. It follows directly that $A$ is $\CC$ or $\RR$-elliptic if and only if the characteristic variety contains no nonzero complex or real points respectively. In the complex case this means that the only possible associated prime is the maximal ideal $(x_1, \ldots, x_n)$, as this is the only prime ideal whose variety is the origin. 
		
		The remaining equivalence is shown using the Ehrenpreis-Palamodov fundamental principle.
		If $M$ is $(x_1,\dotsc,x_n)$-primary, all solutions to $Av=0$ are of the form
		\begin{align*}
			v(z) = \sum_i \int B_i(x,z) \exp(-\imag\langle x, z \rangle) \,d\mu_i(x),
		\end{align*}
		where the $\mu_i$ are measures supported at the origin, and $B_i(x,z)$ are polynomials.
		Therefore we must have $v(z) = \sum_i c_i B_i(0,z)$ for some constants $c_i$. This is clearly a polynomial.
		Conversely, if the characteristic variety contains a nonzero point $\xi \in \CC^n$, then there is a nonpolynomial solution, namely $v(z) = u \exp(\langle \xi, z \rangle)$ for some constant vector $u \in \CC^k$.
	\end{proof}    
	
	Recall from Proposition \ref{prop:decomp} that $\Ass(R^k/\im A_u^\top) = \Ass(M) \setminus \{(0)\}$. In combination with Proposition \ref{prop:c-elliptic_ass_primes} we obtain the following.
	
	\begin{corollary}\label{cor:Au_ellptic}
		Let $P_1, \ldots, P_r$ be the nonzero associated primes of $M$.  Then 
		$$ V(P_1) \cup \dots \cup V(P_r) = \{ \xi \in \CC^n : \ \ker_\CC A_u(\xi)\ne \{0\}\}.$$
		In particular $A_u$ is $\RR$-elliptic if and only if the varieties $V(P_i)$, $i=1, \ldots, r$, contains no real nonzero points, and $A_u$ is $\CC$-elliptic if and only if $\Ass(M) \subset \{(0), (x_1,\ldots,x_n)\}$.  
	\end{corollary}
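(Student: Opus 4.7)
The plan is to apply Proposition~\ref{prop:c-elliptic_ass_primes} directly to the operator $A_u$ in place of $A$, and then match up associated primes using the bookkeeping supplied by Proposition~\ref{prop:decomp}. Since $A_u$ inherits from $A$ the fact that $R^k/\im A_u^\top$ is a finitely generated $R$-module, there is no loss in feeding it into the same characteristic-variety machinery. The payoff is essentially formal once the right decomposition of associated primes is in hand.

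First I would identify $\Ass(R^k/\im A_u^\top)$ with the list $\{P_1,\dotsc,P_r\}$. By property~6 of Proposition~\ref{prop:decomp}, $\Ass(M) = \Ass(R^k/\im A_c^\top) \sqcup \Ass(R^k/\im A_u^\top)$. Property~2 of the same proposition says $R^k/\im A_c^\top$ is either $(0)$-primary or trivial, so $\Ass(R^k/\im A_c^\top) \subseteq \{(0)\}$. Disjointness then forces the nonzero associated primes of $M$, namely $P_1,\dotsc,P_r$, to be exactly the associated primes of $R^k/\im A_u^\top$.

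Second, I would invoke the first statement of Proposition~\ref{prop:c-elliptic_ass_primes} with $A$ replaced by $A_u$: the union $V(P_1)\cup\dotsb\cup V(P_r)$ equals the characteristic variety $V(R^k/\im A_u^\top)$, which is precisely $\{\xi\in\CC^n : \ker_\CC A_u(\xi)\neq\{0\}\}$. This is the main equality of the corollary.

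The two ellipticity equivalences then follow from the corresponding equivalences in Proposition~\ref{prop:c-elliptic_ass_primes} applied to $A_u$: $A_u$ is $\RR$-elliptic iff $V(P_1)\cup\dotsb\cup V(P_r)$ contains no nonzero real point, and $A_u$ is $\CC$-elliptic iff $R^k/\im A_u^\top$ is trivial or $(x_1,\dotsc,x_n)$-primary, which (by the identification above) is the same as $\Ass(M)\subseteq\{(0),(x_1,\dotsc,x_n)\}$. There is no real obstacle in this proof; it is purely a matter of relaying the characteristic-variety description of Proposition~\ref{prop:c-elliptic_ass_primes} through the primary decomposition that produces $A_u$, and of noting that removing the $(0)$-primary component from $M$ precisely excises the associated prime $(0)$.
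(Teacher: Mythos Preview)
Your proof is correct and follows exactly the approach the paper sketches in the sentence preceding the corollary: identify $\Ass(R^k/\im A_u^\top)$ with $\Ass(M)\setminus\{(0)\}$ via Proposition~\ref{prop:decomp}, then apply Proposition~\ref{prop:c-elliptic_ass_primes} to $A_u$. One small completeness remark: to conclude that \emph{every} associated prime of $R^k/\im A_u^\top$ is nonzero you should also invoke property~3 of Proposition~\ref{prop:decomp}, since in the edge case where $R^k/\im A_c^\top$ is trivial the disjointness in property~6 alone does not exclude $(0)$.
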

	
	\begin{proof}[Proof of Theorem \ref{thm:CR-elliptic}]
		By Theorem \ref{thm:CR-ass} and Corollary \ref{cor:Au_ellptic} we have 
		$$ \{ \xi\in \CC^n : \ \rank A(\xi) \mbox{ is maximal} \}  \cap  \{ \xi \in \CC^n : \ \ker_\CC A_u(\xi)\ne \{0\}\} = \emptyset. \qedhere$$
	\end{proof}
	
	In preparation for the proof of Theorem \ref{thm:CR-ass}, we introduce some notation and results from \cite{eisenbud:commalg}. 
	From this point until the end of the section, no homogeneity assumption is needed. 
	In commutative algebra, \emph{Fitting ideals} are important invariants of finitely generated modules.
	\begin{definition}
		Let $\phi \colon R^{\ell} \to R^k$ be a map of free modules, described by a $k \times \ell$ matrix with entries in $R$.
		The ideal $I_j( \phi)$ is defined as the ideal generated by the $j \times j$ minors (i.\,e.\ determinants of submatrices) of the matrix representing $\phi$.
		The \emph{rank} of $\phi$, denoted $\rank_R \phi$ is the largest integer such that $I_j( \phi ) \neq (0)$.
		If $R^\ell \xrightarrow{\phi} R^k \to M \to 0$ is a presentation of the $R$-module $M$, then the $r$th \emph{Fitting ideal} is the ideal
		\begin{align*}
			\Fitt_r(M) := I_{k-r}(\phi)
		\end{align*}
		We denote by $I(M)$, or $I(\phi)$, the first nonzero Fitting ideal; note that $I(M) = I_{\rank \phi}(\phi)$.
	\end{definition}
	
	The ideal $I_j(\phi)$ of $j \times j$ minors is independent of the choice of matrix representing $\phi$. This fundamental fact is sometimes referred to as \emph{Fitting's Lemma}.  
	
	Given a matrix $A \in R^{\ell \times k}$ we can write down a presentation $R^\ell \xrightarrow{A^\top} R^k \to M \to 0$ of the module $M = R^k/\im A^\top$. Here the $r$th Fitting ideal is the ideal generated by the minors of size $k - r$ of $A$.
	Furthermore, $\rank_R A^\top$ coincides with the rank of the evaluated matrix $A(\xi)$ for a generic point $\xi \in \CC^n$.
	Thus it is easy to see that if $A$ is a $\CC$-constant rank operator, then its Fitting ideals can only be either $(0)$ or $(x_1,\dotsc,x_n)$.
	This gives us a fully algebraic characterization of PDE given by a matrix $A$ of $\CC$-constant rank: they correspond to modules $M=R^k/\im A^\top$ where $I(M) = (x_1,\dotsc,x_n)$.
	
	We begin by recalling some standard results about exact sequences in commutative algebra.
	\begin{lemma}[{\cite[Cor.~20.12]{eisenbud:commalg}}]\label{lem:exact_seq_radical}
		Suppose
		\begin{align*}
			0 \to R^{k_m} \xrightarrow{\phi_m} R^{k_{m-1}} \xrightarrow{\phi_{m-1}} \cdots \xrightarrow{\phi_2} R^{k_1} \xrightarrow{\phi_1} R^{k_0}
		\end{align*}
		is an exact sequence of free $R$-modules. Then
		\begin{align*}
			\sqrt{I(\phi_k)} \subseteq \sqrt{I(\phi_{k+1})} \quad \text{for all } k \geq 1.
		\end{align*}
	\end{lemma}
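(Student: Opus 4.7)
The plan is to translate the radical containment into a local statement at primes: $\sqrt{I(\phi_k)}\subseteq\sqrt{I(\phi_{k+1})}$ holds if and only if, for every prime $P\subset R$, the implication $I(\phi_k)R_P=R_P \Rightarrow I(\phi_{k+1})R_P=R_P$ is satisfied. So I would fix such a prime $P$ and localize the whole sequence at $P$; exactness is preserved since $R_P$ is a flat $R$-module. Writing $r_j=\rank_R\phi_j$, the hypothesis supplies a unit $r_k\times r_k$ minor of $\phi_k$ in $R_P$, whereas the definition of rank gives $I_{r_k+1}(\phi_k)=0$.

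The next step is a standard local reduction of $\phi_k$: permute rows and columns to place the unit minor at the top-left, invert that block, then row- and column-reduce to bring $\phi_k$ over $R_P$ to the shape $\begin{pmatrix}I_{r_k} & 0\\ 0 & M\end{pmatrix}$. A Laplace expansion along the $I_{r_k}$ block identifies every entry of $M$ with a $(r_k+1)\times(r_k+1)$ minor of $\phi_k$, forcing $M=0$. In these coordinates $\ker\phi_k\subseteq R_P^{k_k}$ is precisely the free direct summand of rank $k_k-r_k$ corresponding to the last basis vectors.

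I would then invoke the rank identity $r_k+r_{k+1}=k_k$, valid for any exact sequence of free modules; this is verified by reducing to the rank-nullity theorem for vector spaces after localizing at the associated primes of $(0)$ in $R$. Thus $\ker\phi_k$ has rank exactly $r_{k+1}$, and by exactness equals $\im\phi_{k+1}$. Consequently $\phi_{k+1}$ factors as a surjection of $R_P^{k_{k+1}}$ onto a rank-$r_{k+1}$ free summand of $R_P^{k_k}$, and since surjections between free modules over a local ring split, one can choose a basis of the source in which $\phi_{k+1}$ has matrix $\begin{pmatrix}I_{r_{k+1}} & 0\end{pmatrix}$ relative to the target decomposition. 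This exhibits a unit $r_{k+1}\times r_{k+1}$ minor of $\phi_{k+1}$, so $I(\phi_{k+1})R_P=R_P$, as desired.

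The delicate point I anticipate is justifying the rank identity $r_k+r_{k+1}=k_k$ when $R$ has zero divisors: one must verify that the Fitting-ideal notion of rank matches the vector-space rank at every residue field of an associated prime of $(0)$ and that these ranks propagate correctly across an exact sequence. The block reduction is largely mechanical, but requires some care to ensure that the change of basis performed on the target of $\phi_{k+1}$ (which is the source of $\phi_k$) is compatible with the reduction of $\phi_k$, so that both maps can be simplified simultaneously without conflict.
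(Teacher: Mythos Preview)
The paper does not prove this lemma; it is quoted verbatim as \cite[Cor.~20.12]{eisenbud:commalg} and used as a black box in the proof of Theorem~\ref{thm:CR-ass}. Your outline is essentially the standard argument behind Eisenbud's corollary (reduce to the local case, put $\phi_k$ in block form using the unit minor, then read off a unit minor of $\phi_{k+1}$ from the fact that its image is a free direct summand of the correct rank), so there is nothing to compare against in the paper itself.

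One remark on your stated ``delicate point'': in the paper's setting $R=\mathbb{F}[x_1,\dotsc,x_n]$ is an integral domain, so the rank identity $r_k+r_{k+1}=k_k$ follows immediately by passing to the fraction field, and your worry about associated primes of $(0)$ evaporates. If you want the full generality of Eisenbud's statement over an arbitrary Noetherian ring, the cleanest route is to observe that the rank of $\phi_j$ (in the Fitting-ideal sense) is unchanged under localization, and that over a local ring with $I(\phi_k)$ the unit ideal your block reduction already exhibits $\ker\phi_k$ as a free summand of rank $k_k-r_k$; exactness then forces $\im\phi_{k+1}$ to equal this summand, and any matrix whose image is a free summand of rank $s$ has $I_s$ equal to the unit ideal. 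This sidesteps the need to compare ranks at different residue fields.
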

	
	Here $\sqrt{I}$ denotes the \emph{radical} of the ideal $I$, defined as the set of elements $r$ such that $r^k\in I$ for some $k$. 
	
	\begin{lemma}[{\cite[Cor.~20.14]{eisenbud:commalg}}]\label{lem:exact_seq_ass}
		Let $M$ be an $R$-module with finite free resolution
		\begin{align}\label{eq:module_free_res}
			0 \to R^{k_m} \xrightarrow{\phi_m} \cdots \to R^{k_1} \xrightarrow{\phi_1} R^{k_0} \to M \to 0.
		\end{align}
		If $P$ is a prime of $R$ and $d = \operatorname{depth}(P)$, then $P \in \Ass M$ if and only if $P \supseteq I(\phi_d)$.
	\end{lemma}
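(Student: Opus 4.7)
The plan is to localize at $P$ and combine the Auslander-Buchsbaum formula with the characterization of projective dimension in terms of Fitting ideals. Recall that a prime $P$ is an associated prime of $M$ precisely when $\depth_{R_P} M_P = 0$, so the goal is to translate the containment $P \supseteq I(\phi_d)$ into this depth statement.

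First, I would localize the resolution \eqref{eq:module_free_res} at $P$; since localization is exact, the result is a finite free resolution of $M_P$ over the regular local ring $R_P$, so $M_P$ has finite projective dimension. The Auslander-Buchsbaum formula then gives
\begin{equation*}
\depth_{R_P} M_P + \operatorname{pd}_{R_P} M_P \;=\; \depth R_P \;=\; d,
\end{equation*}
using that $R$ is Cohen-Macaulay (in fact regular), so $\depth R_P$ coincides with $\operatorname{ht}(P) = \depth(P) = d$. Consequently, $P \in \Ass M$ is equivalent to $\operatorname{pd}_{R_P} M_P = d$.

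The remaining step is to relate the projective dimension of $M_P$ to the Fitting ideals of the maps $\phi_k$. Here one invokes the Buchsbaum--Eisenbud exactness criterion (or equivalently, a direct analysis of how a finite free resolution becomes split exact at a localization): over the local ring $R_P$, the largest $k$ for which $I(\phi_k) \subseteq P$ equals $\operatorname{pd}_{R_P} M_P$. Indeed, if $I(\phi_{k+1}) \not\subseteq P$, then after localization one of the maximal minors of $\phi_{k+1}$ becomes a unit, allowing us to split off an identity block and truncate the resolution. Combined with the previous paragraph, this yields $P \in \Ass M \iff I(\phi_d) \subseteq P$, noting that the upper bound $\operatorname{pd}_{R_P} M_P \leq d$ from Auslander-Buchsbaum is automatic.

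The main obstacle I anticipate is precisely the link between projective dimension and Fitting ideals. Establishing that $\operatorname{pd}_{R_P} M_P$ equals the largest index $k$ with $I(\phi_k) \subseteq P$ requires either the full Buchsbaum-Eisenbud criterion for exactness of complexes of free modules, or a careful inductive argument that prunes trivial summands whenever some $I(\phi_k)$ escapes $P$. This is the nontrivial commutative-algebra core of the lemma, and in practice one would invoke the result directly from Eisenbud's book rather than reproducing the full machinery here.
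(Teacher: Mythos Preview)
The paper does not supply its own proof of this lemma: it is quoted verbatim as \cite[Cor.~20.14]{eisenbud:commalg} and used as a black box in the proof of \Cref{thm:CR-ass}. There is therefore nothing in the paper to compare your argument against.

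That said, your outline is the standard route to this result and is essentially what one finds in Eisenbud. Localizing at $P$, invoking Auslander--Buchsbaum over the regular local ring $R_P$ to convert $\depth_{R_P} M_P = 0$ into $\operatorname{pd}_{R_P} M_P = d$, and then reading off the projective dimension from the first Fitting ideal contained in $P$ is exactly the intended mechanism. The one place to be slightly careful is the final equivalence: you want ``$I(\phi_d) \subseteq P$'' rather than ``the largest $k$ with $I(\phi_k)\subseteq P$ equals $d$''. These match because $\operatorname{pd}_{R_P} M_P \leq d$ is automatic from Auslander--Buchsbaum (depth is nonnegative), so $I(\phi_d)\subseteq P$ already forces equality; and conversely the nesting of radicals from \Cref{lem:exact_seq_radical} guarantees that once $I(\phi_d)\subseteq P$, all earlier $I(\phi_k)$ are contained in $P$ as well. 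Your closing remark is apt: the substantive content really is the Buchsbaum--Eisenbud exactness criterion, and citing it is the honest thing to do.
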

	In our setting $\depth(P)$ can be defined as the maximal length of a regular sequence inside $P$. The only prime ideal of depth zero is the ideal $(0)$, so all associated primes of $M$ are detected by \Cref{lem:exact_seq_ass}.
	
	Since a morphism of $R$-modules $\phi \colon R^\ell \to R^k$ can be represented by a $k \times \ell$ matrix of polynomials, one can also study the linear map $\phi(\xi) \colon \CC^\ell \to \CC^k$ corresponding to the evaluation of the entries of $\phi$ for any point $\xi \in \CC^n$.
	In general, while evaluations of exact sequences of $R$-modules are not necessarily exact, the set of points where the evaluated sequence fails to be exact is a proper (complex) algebraic variety.
	This means that a syzygy matrix $S$ of $A$ has the property that $\im_\CC S(\xi) = \ker_\CC A(\xi)$ for almost every $\xi \in \CC^n$. It is also clear that this holds over $\RR$.
	For a rigorous proof, see \cite[Cor.~3.4]{eisenbud:syz}.

	Now we are ready to prove Theorem \ref{thm:CR-ass}. 
	
	\begin{proof}[Proof of Theorem \ref{thm:CR-ass}]
		Let $X \subset \CC^n$ be the set of points where the rank of $A$ is maximal. Suppose $P$ is a nonzero associated prime of $M$, and let $d=\depth(P)$. Consider a minimal free resolution of $M$, i.\,e.\ a free resolution which is also exact. With the notation in (\ref{eq:module_free_res}), the map $\phi_1$ is given by the matrix $A^\top$. By Lemma \ref{lem:exact_seq_ass} we have $P \supseteq I(\phi_d)$, which implies $P\supseteq \sqrt{I(\phi_d)}$. It then follows from Lemma \ref{lem:exact_seq_radical} that $P \supseteq  \sqrt{I(\phi_1)}$, or equivalently $V(P) \subseteq V(I(\phi_1))$. But $V(I(\phi_1))$ are precisely the points $\xi$ where $\rank_\FF A(\xi) < \rank_R A$. Hence $X \cap V(P) = \emptyset$. 
	\end{proof}

	Since the converse of Theorem \ref{thm:CR-elliptic} fails, it would also be interesting to characterize when an uncontrollable operator is $\C$-elliptic more precisely.
	
	To further improve our understanding of $\C$-constant rank operators, we will give the complex version and sharpen the real result proved in \cite{Raita2021}:
	\begin{theorem}\label{thm:pointwise_exact}
		Let $A \in R^{\ell \times k}$ be a polynomial matrix with syzygy matrix $S$. Then
		$$
		\ker_\C A(\xi)={\im}_\C S(\xi)\quad\text{for all }\xi 
		\ \text{where }
		\rank A(\xi) \text{ is maximal}.
		$$
		In the set where the rank is not maximal, we have $\ker_\C A(\xi)\supsetneq \im_\C S(\xi)$.
	\end{theorem}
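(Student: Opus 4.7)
The plan is to reduce the statement to a containment of Fitting ideals relating $A$ and $S$. The inclusion $\im_\C S(\xi) \subseteq \ker_\C A(\xi)$ always holds, since $AS=0$ as polynomial matrices. Setting $r = \rank_R A$, exactness of $R^{k'} \xrightarrow{S} R^k \xrightarrow{A} R^\ell$ at $R^k$ yields $\rank_R S = k - r$ upon localizing at $(0)$. Because evaluation can only decrease rank (any vanishing polynomial minor remains zero at every point), we always have $\rank A(\xi) \leq r$ and $\rank S(\xi) \leq k - r$. By dimension counting, the inclusion $\im_\C S(\xi) \subseteq \ker_\C A(\xi)$ is an equality precisely when $\rank A(\xi) = r$ \emph{and} $\rank S(\xi) = k - r$, i.e., when both matrices attain their generic ranks at $\xi$.

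The heart of the proof is the following one-sided implication: if $\rank A(\xi) = r$, then $\rank S(\xi) = k - r$. In the language of Fitting ideals, writing $I(A) = I_r(A)$ for the ideal of $r \times r$ minors of $A$ and $I(S) = I_{k-r}(S)$ for the ideal of $(k-r) \times (k-r)$ minors of $S$, this is the containment $V(I_{k-r}(S)) \subseteq V(I_r(A))$, equivalently $\sqrt{I(A)} \subseteq \sqrt{I(S)}$. I would obtain this from \Cref{lem:exact_seq_radical} applied to a finite free resolution of the cokernel $R^\ell/\im_R A$. The partial sequence $R^{k'} \xrightarrow{S} R^k \xrightarrow{A} R^\ell$ starts such a resolution; by iteratively adjoining syzygies of $S$ to the left and invoking Hilbert's syzygy theorem, the resolution terminates in at most $n$ steps. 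Truncating gives an exact complex $0 \to R^{k_m} \to \cdots \to R^{k'} \xrightarrow{S} R^k \xrightarrow{A} R^\ell$ of exactly the form required by \Cref{lem:exact_seq_radical}, with $\phi_1 = A$ and $\phi_2 = S$, yielding $\sqrt{I(A)} \subseteq \sqrt{I(S)}$.

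Combining: if $\rank A(\xi) = r$, then $\xi \notin V(I_r(A))$, hence $\xi \notin V(I_{k-r}(S))$ by the containment above, so $\rank S(\xi) = k - r$ and the inclusion is an equality. For the strict inclusion in the second half of the theorem, if $\rank A(\xi) < r$, a direct dimension count gives $\dim_\C \ker_\C A(\xi) = k - \rank A(\xi) > k - r = \rank_R S \geq \rank S(\xi) = \dim_\C \im_\C S(\xi)$, so $\im_\C S(\xi) \subsetneq \ker_\C A(\xi)$. I expect the main subtlety to be matching the ``algebraic rank'' of a map of free modules appearing in \Cref{lem:exact_seq_radical} (the largest $j$ with $I_j$ nonzero) with the generic pointwise rank, so as to correctly identify $I(A)$ with $I_r(A)$ and $I(S)$ with $I_{k-r}(S)$; once this bookkeeping is in place, Hilbert's syzygy theorem combined with \Cref{lem:exact_seq_radical} does the real work.
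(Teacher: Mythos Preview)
Your proposal is correct and follows essentially the same approach as the paper: both arguments extend $R^{k'}\xrightarrow{S}R^k\xrightarrow{A}R^\ell$ to a finite free resolution of $R^\ell/\im_R A$, invoke \Cref{lem:exact_seq_radical} to get $\sqrt{I(A)}\subseteq\sqrt{I(S)}$ (hence $V(I(S))\subseteq V(I(A))$), and conclude via the rank identity $\rank_R A+\rank_R S=k$. Your write-up is in fact more explicit than the paper's on two points the paper leaves implicit---the appeal to Hilbert's syzygy theorem for finiteness of the resolution, and the direct dimension count establishing the strict inclusion when $\rank A(\xi)<r$.
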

	
	\begin{proof}
		Take a free resolution of $R^\ell/\im_R A$ as in \cref{eq:module_free_res}.
		The maps $\phi_1, \phi_2$ orrespond to $A,S$ respectively, and the equality $k = \rank A(\xi) + \rank S(\xi)$ is true almost everywhere.
		
		In particular, the maximal ranks of $A$ and $S$ sum to $k$, so $\ker A(\xi) = \im S(\xi)$ whenever the ranks of $A$ and $S$ are maximal.
		By \Cref{lem:exact_seq_radical}, the variety where the rank of $A$ drops contains the variety where the rank of $S$ drops, hence the rank of $S$ is maximal whenever the rank of $A$ is.
	\end{proof}
	
	\begin{corollary}\label{cor:const_rank_exact_all_nonzero}
		Let $\FF \in \{ \CC, \RR\}$.  If $A$ has $\FF$-constant rank, then the syzygy matrix $S$ has the property
		\begin{align*}
			\ker_\CC A(\xi)= \im_\CC S(\xi)  \quad \text{for all } \xi \in \FF^n \setminus \{0\}.
		\end{align*}
	\end{corollary}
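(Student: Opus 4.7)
The plan is to deduce this corollary directly from Theorem~\ref{thm:pointwise_exact}, so the entire task reduces to verifying that the $\FF$-constant rank hypothesis forces $\rank A(\xi)$ to be maximal at every $\xi \in \FF^n \setminus \{0\}$. Once that is in hand, the pointwise identity $\ker_\CC A(\xi) = \im_\CC S(\xi)$ is immediate from the cited theorem.

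For the case $\FF = \CC$, the argument is essentially definitional. By hypothesis there is some integer $r$ with $\rank A(\xi) = r$ for every nonzero $\xi \in \CC^n$. The generic rank of $A$, which equals $\rank_R A^\top$ and hence the maximal value of $\xi \mapsto \rank A(\xi)$, is achieved on a Zariski-open dense subset of $\CC^n$. This open set necessarily contains nonzero points, so $r$ must coincide with the generic rank. Hence rank is maximal at every nonzero complex point, and Theorem~\ref{thm:pointwise_exact} applies.

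The case $\FF = \RR$ requires the observation already recorded in the paragraph after Theorem~\ref{thm:CR-elliptic}: if $A$ has $\RR$-constant rank equal to $r$, then $r$ is also the generic complex rank of $A$. The reason is that $\RR^n$ is Zariski-dense in $\CC^n$, so the complex variety $\{\xi \in \CC^n : \rank A(\xi) < \rho\}$, where $\rho$ denotes the maximal (complex) rank, is proper and therefore cannot contain $\RR^n \setminus \{0\}$. Thus there is a nonzero $\xi_0 \in \RR^n$ with $\rank A(\xi_0) = \rho$, and combined with the $\RR$-constant rank assumption this forces $r = \rho$. Consequently every nonzero real point is a point of maximal rank, and Theorem~\ref{thm:pointwise_exact} again gives the desired equality.

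There is no real obstacle here; the statement is a clean packaging of Theorem~\ref{thm:pointwise_exact} together with the elementary fact that constant rank on a Zariski-dense subset of $\CC^n \setminus \{0\}$ coincides with the generic rank. The only subtle point worth stating carefully is the Zariski-density step in the real case, since the corollary claims something about complex kernels and images evaluated at \emph{real} points, and one must justify that real constant rank is not smaller than the complex generic rank.
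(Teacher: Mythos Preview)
Your argument is correct and matches the paper's intended route: the corollary is stated without proof immediately after Theorem~\ref{thm:pointwise_exact}, and is meant to follow from it together with the Zariski-density observation recorded just after Theorem~\ref{thm:CR-elliptic}, exactly as you do. The only addition you make is to spell out the (trivial) $\FF = \CC$ case and the density step more carefully than the paper does.
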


	We conclude the section by an example illustrating Theorem \ref{thm:pointwise_exact}. 
	
	\begin{example}
		Let $A$ be the matrix from Example \ref{exa:Euler}. A computation in Macaulay2 gives the syzygy matrix
		$$ S=
		\begin{pmatrix}
			x_{2}x_{3}&0&x_{3}^{2}\\
			-x_{2}^{2}&0&-x_{2}x_{3}\\
			-x_{1}x_{3}&-x_{2}^{2}+x_{3}^{2}&-x_{1}x_{2}\\
			x_{1}x_{2}&-2x_{2}x_{3}&-x_{1}x_{3}\\
			-x_{1}x_{3}&x_{2}^{2}+x_{3}^{2}&x_{1}x_{2}\\
		\end{pmatrix}
		$$
		The generic rank of $A$ is 3 and is attained for example at the point $\xi=(0,1,0)$. A simple computation shows that $\ker_\CC A(\xi)$ is spanned by the two vectors $(0 \ 1 \ 0 \ 0 \ 0 )^\top$ and $(0 \ 0 \ -1 \ 0 \ 1)^\top$, which also span $\im_\CC S(\xi)$. If we instead take the point $\eta=(1,0,0)$ then rank$A(\eta)$ drops to 2. In this case $\ker_\CC A(\eta)$ is a three dimensional space 
		while $\im_\CC S(\eta) = \{0\}$. 
	\end{example}
	
	\section{Applications to the Calculus of Variations}\label{sec:cov}
	We will investigate the lower semicontinuity of variational integrals
	$$
	E[v]\coloneqq\int_{\Omega}F(v(x))\dif x\quad\text{for }A v=0.
	$$
	Under reasonable growth and coercivity conditions on the integrand $F$, the lower semicontinuity of $E$ is sufficient to conclude that we have existence of minimizers for of functionals as above. Problems that involve a coupling a linear differential system with a nonlinear effect are ubiquitous in continuum mechanics \cite{Tartar2005}.
	
	To state our problem clearly, recall the Lebesgue spaces $L^1(\Omega)$ and $L^\infty(\Omega)$, consisting of measurable functions $f\colon\Omega\rightarrow\R$ (or $\CC$) such that
	$$
	\|f\|_{L^1(\Omega)}\coloneqq\int_{\Omega}|f(x)|\dif x<\infty,\quad \|f\|_{L^\infty(\Omega)}\coloneqq
	\underset{x\in\Omega}{\mathrm{ess\,sup}}\, |f(x)|<\infty
	$$
	Here $\Omega\subset\R^n$ is a bounded open set, assumed \emph{convex} for simplicity.  We also recall that a sequence $v_j\wstar v$ in $L^\infty(\Omega)^k$ if and only if
	$$
	\lim_{j\rightarrow\infty}\int_\Omega v_j\cdot g\dif x=\int_\Omega v\cdot g\dif x\quad\text{for all }g\in L^1(\Omega)^k.
	$$
	Throughout this section we will consider $R=\R[x_1,\dotsc,x_n]$ and assume that $A$ is \emph{homogeneous in the rows}, i.e. Assumption~\ref{ass:hom} holds. Let $S\in R^{k\times k'}$ be its syzygy matrix, which is \emph{homogeneous in the columns}: there exist integers $h_j$ such that $S_{ij}$ is $h_j$-homogeneous for each $i=1,\ldots, k$, $j=1,\ldots, k'$.
	
	We can now state the main question that we address in this section:
	\begin{question}
		For which integrands $F$ do we have that
		\begin{align}\label{eq:lsc}\tag{LSC}
			\begin{rcases}
				v_j\wstar v&\text{in }L^\infty(\Omega)^k\\
				A v_j=0
			\end{rcases}\implies
			\liminf_{j\rightarrow\infty}\int_{\Omega}F(v_j(x))\dif x\geq \int_{\Omega}F(v(x))\dif x?
		\end{align}
		In other words, for which nonlinearities $F$, do we have that $E$ is weakly-* sequentially lower semicontinuous on $\ker_{L^\infty(\Omega)^k}A$?
	\end{question}
	The existing results, which we will recall soon, revolve around the notion of $A$-quasiconvexity, as introduced in \cite{FonsecaMuller1999} (see also \cite{Dacorogna1982}). Here we consider a refinement of this notion:
	\begin{definition}
		Let $Q=(0,1)^n$. A continuous integrand $F\colon\R^k\rightarrow \R$ is said to be
		\begin{enumerate}
			\item \textbf{periodically $A$-quasiconvex} if for all $z\in\R^k$ and $v\in C^\infty_{\per}(Q)^k$ with $A v=0$ and $\int_Qv=0$, we have
			$$
			F(z)\leq \int_Q F(z+v(x))\dif x.
			$$
			\item \textbf{compactly supported $A$-quasiconvex} if for all $z\in\R^k$ and $v\in C^\infty_{c}(Q)^k$ with $A v=0$, we have
			$$
			F(z)\leq \int_Q F(z+v(x))\dif x.
			$$
		\end{enumerate}
	\end{definition}
	Periodic $A$-quasiconvexity coincides with the original notion of $A$-quasiconvexity. While it is clear that periodic $A$-quasiconvexity implies compactly supported $A$-quasiconvexity, the converse need not be true in general, as the next example we present shows. On the other hand, the two notions coincide if $A$ has $\R$-constant rank \cite{Raita2019}.
	\begin{example}\label{exa:separate_convexity}
		Let $A_0(v_1,v_2)\coloneqq(\partial_2v_1,\,\partial_1v_2)$. Then the periodic solutions of $A_0 v=0$ are of the form $v_1(x_1,x_2)=f_1(x_1)$ and $v_2(x_1,x_2)=f_2(x_2)$ where $f_i\in C^\infty_{\per}(0,1)$ have $\int_0^1f_i=0$. On the other hand, there are no nontrivial compactly supported solutions of $A_0v=0$, so that any $F\in C(\R^2)$ is compactly supported $A_0$-quasiconvex. However, $F_0(z_1,z_2)\coloneqq -z_1^2$ is not periodically $A_0$-quasiconvex. To see this, test the inequality at $z=0$ with $v=(f_1(x_1),0)$ where $f_1$ is not identically zero.
		Nevertheless, there exist nontrivial periodically $A_0$-quasiconvex integrands; one can check by hand that $F(z_1,z_2)\coloneqq z_1z_2$ is periodically $A_0$-quasiconvex (in fact, $\pm F$ is). It is not too difficult to show that the class of periodically $A_0$-quasiconvex integrands is given by all functions $F(z_1,z_2)$ that are convex in the $z_1$ and $z_2$ directions. 
	\end{example}
	
	Is not known whether there exist operators $A$ that do  \emph{not} have $\R$-constant rank, but for which compactly supported $A$-quasiconvex integrands are necessarily periodically $A$-quasiconvex. We suspect that the class of operators for which the two classes of integrands coincide is the class of operators $A$ which have $\R$-elliptic uncontrolable part $A_u$.
	
	We can now state the main known result concerning lower semicontinuity of the integrals we are interested in:
	\begin{theorem}[{\cite{FonsecaMuller1999,Raita2019}}]
		Let $A$ have $\R$-constant rank and $F\in C(\R^k)$. Then the following are equivalent:
		\begin{enumerate}
			\item\label{itm:lsc} \eqref{eq:lsc} holds;
			\item\label{itm:pAqc} $F$ is periodically $A$-quasiconvex;
			\item\label{itm:csAqc} $F$ is compactly supported $A$-quasiconvex.
		\end{enumerate}
	\end{theorem}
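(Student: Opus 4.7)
The plan is to establish the equivalences cyclically, \ref{itm:lsc} $\Rightarrow$ \ref{itm:pAqc} $\Rightarrow$ \ref{itm:csAqc} $\Rightarrow$ \ref{itm:lsc}, where the $\R$-constant rank hypothesis enters essentially only in the last two steps; the result combines the arguments in \cite{FonsecaMuller1999} and \cite{Raita2019}. For \ref{itm:lsc} $\Rightarrow$ \ref{itm:pAqc} I would run the standard oscillation construction: given $z \in \R^k$ and $v \in C_{\per}^\infty(Q)^k$ with $A v = 0$ and $\int_Q v = 0$, set $v_j(x) \coloneqq z + v(jx)$ on $\Omega$. Row-homogeneity with positive degrees yields $A z = 0$ and $A v_j = 0$, while periodicity and the vanishing mean give $v_j \wstar z$ in $\lebe^\infty(\Omega)^k$. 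Plugging into \eqref{eq:lsc} and invoking the identity
$$
\lim_{j \to \infty} \int_\Omega F(z + v(jx))\, dx = |\Omega| \int_Q F(z + v(y))\, dy,
$$
proved by tiling $\Omega$ with cubes of side $1/j$ and estimating the boundary strip by Lebesgue measure, yields the periodic $A$-quasiconvexity inequality.

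For the equivalence \ref{itm:pAqc} $\Leftrightarrow$ \ref{itm:csAqc} I would use vector potentials, available precisely because $A$ has $\R$-constant rank: Theorem~\ref{thm:exact_Ccinfty} yields $v = S u$ with $u \in \ccinfty(Q)^{k'}$ for every $v \in \ccinfty(Q)^k$ with $A v = 0$, and \cite[Lem.~2]{Raita2019} yields a periodic potential $u$ whenever $v \in C_{\per}^\infty(Q)^k$ has $A v = 0$ and zero mean. For $\Rightarrow$, the periodic extension of a compactly supported $v = S u$ is of the form $S \tilde u \in C_{\per}^\infty(Q)^k$; its integral over $Q$ vanishes because each column of the minimal $S$ has positive homogeneity, and it agrees with $v$ on $Q$, so applying \ref{itm:pAqc} to $S \tilde u$ delivers \ref{itm:csAqc}. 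For $\Leftarrow$, truncating a periodic $v = S u$ as $v_\eps \coloneqq S(\chi_\eps u) \in \ccinfty(Q)^k$ with a smooth cutoff $\chi_\eps$ and passing to the $\lebe^p$-limit in \ref{itm:csAqc}, using continuity of $F$ and the uniform $\lebe^\infty$ bound, recovers \ref{itm:pAqc}.

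The main obstacle is \ref{itm:csAqc} $\Rightarrow$ \ref{itm:lsc}, which follows the blow-up strategy of \cite{FonsecaMuller1999}. For an $\lebe^\infty$-bounded $A$-free sequence $v_j \wstar v$, extract a subsequence generating a Young measure $\{\nu_x\}_{x \in \Omega}$ and reduce \eqref{eq:lsc} to the pointwise Jensen-type inequality $F(v(x_0)) \leq \langle \nu_{x_0}, F \rangle$ at Lebesgue points of $v$. On a small cube $Q_r(x_0)$ the correctors $v_j - v(x_0)$ are locally $A$-free with vanishing mean; projecting them onto $\ccinfty$ $A$-free fields produces admissible test fields for \ref{itm:csAqc} whose integrals control $\langle \nu_{x_0}, F \rangle$. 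The construction of this projection, and its $\lebe^p$-boundedness, is the technical core and the point where the $\R$-constant rank hypothesis is indispensable: it guarantees that the Moore--Penrose pseudoinverse of $A(\xi)$ is smooth and $0$-homogeneous on $\R^n \setminus \{0\}$, so the projection is a bounded Calder\'on--Zygmund operator. Integrating the resulting pointwise inequality over $\Omega$ yields \eqref{eq:lsc}.
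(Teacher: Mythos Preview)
The paper does not actually prove this theorem; it is stated as a known result and attributed entirely to \cite{FonsecaMuller1999,Raita2019}, so there is no in-paper proof to compare against. Your outline is broadly faithful to those references, but one step has a genuine gap.

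The problematic step is \ref{itm:csAqc} $\Rightarrow$ \ref{itm:pAqc}. You propose to take a periodic $v=Su$, truncate the potential as $v_\eps=S(\chi_\eps u)$, and pass to the limit using ``the uniform $\lebe^\infty$ bound''. But there is no such bound: by the product rule, $S(\chi_\eps u)$ contains terms of the form $\partial^{\beta-\alpha}\chi_\eps\,\partial^\alpha u^i$ with $|\alpha|<h_i$, and $|\partial^{\beta-\alpha}\chi_\eps|\sim\eps^{-(h_i-|\alpha|)}$ on the transition layer. Since $u$ is a fixed smooth periodic function, these lower-order derivatives do not compensate, so $\|v_\eps\|_{\lebe^\infty}$ blows up and neither $\lebe^p$-convergence nor dominated convergence is available. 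The argument that actually works (and is implicit in \cite{Raita2019}) first \emph{rescales}: for integer $\lambda$, set $u_\lambda^i(x)=\lambda^{-h_i}u^i(\lambda x)$, so that $Su_\lambda(x)=v(\lambda x)$ and $\int_Q F(z+Su_\lambda)=\int_Q F(z+v)$ by periodicity, while $\partial^\alpha u_\lambda^i\to 0$ uniformly for every $|\alpha|<h_i$. Now the cutoff error $S(\chi_\eps u_\lambda)-\chi_\eps Su_\lambda$ is $O_\eps(\lambda^{-1})$ in $\lebe^\infty$, and one concludes by first sending $\lambda\to\infty$ and then $\eps\to0$. This is exactly the mechanism that also drives the paper's Proposition~\ref{prop:lsc}.

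A smaller remark: in your final step you invoke the Fonseca--M\"uller projection to land in \emph{compactly supported} $A$-free fields, but the projection in \cite{FonsecaMuller1999} is built from Fourier series and produces \emph{periodic} $A$-free correctors; what it yields directly is \ref{itm:pAqc} $\Rightarrow$ \ref{itm:lsc}. Your cycle therefore closes most naturally as \ref{itm:lsc} $\Rightarrow$ \ref{itm:pAqc} $\Rightarrow$ \ref{itm:lsc} together with the separate equivalence \ref{itm:pAqc} $\Leftrightarrow$ \ref{itm:csAqc}.
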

	In the absence of the $\R$-constant rank condition, the implication \ref{itm:pAqc}$\implies$\ref{itm:lsc} is only known essentially in the example \cite{Muller1999} (see also the generalization in \cite{LeeMullerMuller2009}). In general, we expect to have the following:
	\begin{question}
		Let  $F\in C(\R^k)$. Are the following equivalent?
		\begin{enumerate}
			\item \eqref{eq:lsc} holds;
			\item $F$ is periodically $A$-quasiconvex.
		\end{enumerate}
	\end{question}
	Answering this question is well beyond the scope of this paper. Instead, we can investigate circumstances when compactly supported $A$-quasiconvexity is sufficient for lower semicontinuity. We will make the following technical assumption on the syzygy matrix $S$ of $A$: 
	\begin{assumption}\label{ass}
		For any sequence $u_j $ such that $Su_j\wstar Su$ in $ L^\infty(\Omega)^k$ as $j\rightarrow\infty$, we have that there exists $\tilde u_j$, $\tilde  u$ such that $Su_j=S\tilde u_j$, $Su=S\tilde u$ and $\partial^\alpha \tilde u_j^i\rightarrow \partial^\alpha \tilde u^i$ in $L^\infty(\Omega)$ as $j\rightarrow\infty$ for each  $i=1,\ldots, k'$ and $|\alpha|<h_i$.
	\end{assumption}
	Here $y^i$ denotes the $i$th entry of $y\in\R^{k'}$. The following is our main analytic result:
	\begin{theorem}\label{thm:lsc_main}
		Let  $F\in C(\R^k)$ and $A$ be a differential operator with syzygy matrix $S$ such that the compactness Assumption \ref{ass} holds. Then the following are equivalent:
		\begin{enumerate}
			\item\label{itm:main_lsc} \eqref{eq:lsc} holds if $v_j\in C_c^\infty(\Omega)^k$;
			\item\label{itm:main_qc} $F$ is compactly supported $A$-quasiconvex.
		\end{enumerate}
		Alternatively, if $A$ is controllable in $C^\infty$, we can replace \ref{itm:main_lsc} with
		\begin{enumerate}  \setcounter{enumi}{2}
			\item\label{itm:lsc_controllable} \eqref{eq:lsc} holds if $v_j\in C^\infty(\Omega)^k$.
		\end{enumerate}
	\end{theorem}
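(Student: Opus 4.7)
The plan is to prove the two implications $\ref{itm:main_lsc}\Rightarrow\ref{itm:main_qc}$ and $\ref{itm:main_qc}\Rightarrow\ref{itm:main_lsc}$ in turn, and then indicate the small adaptation needed for the controllable case.

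\emph{Easy direction $\ref{itm:main_lsc}\Rightarrow\ref{itm:main_qc}$:} Fix $z\in\R^k$ and $v\in C_c^\infty(Q)^k$ with $Av=0$. Extending $v$ by zero to $\R^n$ and applying Theorem~\ref{thm:exact_Ccinfty}, one has $v=Su$ for some $u\in C_c^\infty(\R^n)^{k'}$; since every row of $S$ has positive differential order by Assumption~\ref{ass:hom}, integration by parts gives $\int_Q v\,dx=0$. Tile $\Omega$ by disjoint rescaled copies,
\begin{equation*}
    v_j(x)=z+\sum_{\lambda\in j^{-1}\mathbb Z^n,\;\lambda+j^{-1}Q\subset\Omega} v\bigl(j(x-\lambda)\bigr),
\end{equation*}
so that $v_j\in C_c^\infty(\Omega)^k$, $Av_j=0$, and $v_j\wstar z$ in $L^\infty(\Omega)^k$ because $\int_Q v=0$. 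A Riemann-sum calculation shows $\int_\Omega F(v_j)\,dx\to|\Omega|\int_Q F(z+v(y))\,dy$, and applying hypothesis~\ref{itm:main_lsc} to this sequence yields $F(z)\leq\int_Q F(z+v(y))\,dy$.

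\emph{Hard direction $\ref{itm:main_qc}\Rightarrow\ref{itm:main_lsc}$:} Let $v_j\in C_c^\infty(\Omega)^k$ with $Av_j=0$ and $v_j\wstar v$ in $L^\infty(\Omega)^k$. Extending by zero and invoking Theorem~\ref{thm:exact_Ccinfty} gives $v_j=Su_j$ for some $u_j\in C_c^\infty(\R^n)^{k'}$; standard compactness in the appropriate distribution space, together with Assumption~\ref{ass}, produces representatives $\tilde u_j,\tilde u$ with $S\tilde u_j=v_j$, $S\tilde u=v$, and $\partial^\alpha\tilde u_j^i\to\partial^\alpha\tilde u^i$ strongly in $L^\infty(\Omega)$ for every $i$ and every $|\alpha|<h_i$. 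Fix $\delta>0$, partition $\Omega$ into disjoint open cubes $\{Q_\ell\}$ of side $\delta$, set $\bar v_\ell:=|Q_\ell|^{-1}\int_{Q_\ell}v\,dx$, and pick cutoffs $\eta_\ell\in C_c^\infty(Q_\ell)$ that equal $1$ off a thin boundary layer. Define
\begin{equation*}
    w_{j,\ell}:=S\bigl(\eta_\ell(\tilde u_j-\tilde u)\bigr)\in C_c^\infty(Q_\ell)^k,\qquad Aw_{j,\ell}=0.
\end{equation*}
Compactly supported $A$-quasiconvexity applied at $z=\bar v_\ell$ yields $F(\bar v_\ell)|Q_\ell|\leq\int_{Q_\ell} F(\bar v_\ell+w_{j,\ell})\,dx$; summing over $\ell$, taking $\liminf_{j\to\infty}$, and then letting $\delta\to 0$ delivers the lower semicontinuity bound. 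Statement~\ref{itm:lsc_controllable} follows from the same blueprint, with Lemma~\ref{lem:controllable} supplying potentials $u_j\in C^\infty$ in place of Theorem~\ref{thm:exact_Ccinfty}, after which the localization argument is identical.

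The principal obstacle is controlling $\bar v_\ell+w_{j,\ell}-v_j$ on each cube $Q_\ell$. A Leibniz expansion writes $w_{j,\ell}=\eta_\ell(v_j-v)+R_{j,\ell}$, where $R_{j,\ell}$ is a finite sum of products of derivatives of $\eta_\ell$ with strictly lower-order derivatives $\partial^\alpha(\tilde u_j^i-\tilde u^i)$ having $|\alpha|<h_i$. By Assumption~\ref{ass}, $R_{j,\ell}\to 0$ in $L^\infty(Q_\ell)$ as $j\to\infty$, so $\bar v_\ell+w_{j,\ell}$ agrees with $v_j$ on the bulk of $Q_\ell$ up to an error controlled by $|\bar v_\ell-v|$ and a boundary-layer contribution of measure $O(\delta^{n-1})$. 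Continuity of $F$, the uniform $L^\infty$ bound on $\{v_j\}$, and the approximation $\bar v_\ell\to v$ in $L^1(\Omega)$ as $\delta\to 0$ together justify passing to the double limit and close the argument.
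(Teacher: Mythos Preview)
Your sufficiency argument (\ref{itm:main_qc}$\Rightarrow$\ref{itm:main_lsc}, and likewise \ref{itm:main_qc}$\Rightarrow$\ref{itm:lsc_controllable}) follows the same route as the paper's, which is packaged as Proposition~\ref{prop:lsc}: represent $v_j = Su_j$ via Theorem~\ref{thm:exact_Ccinfty} (respectively controllability), use Assumption~\ref{ass} to gain strong convergence of the lower-order derivatives of the potentials, tile $\Omega$ by small cubes, cut off the potentials on each cube, and apply the quasiconvexity inequality there. One small difference: to compare $F(v_j)$ with $F(\bar v_\ell + v_j - v)$ the paper invokes the local \emph{Lipschitz} continuity of $F$ on $\mathrm{span\,}\mathcal W^S$ (Lemma~\ref{lem:lipschitz}), whereas you appeal only to continuity; uniform continuity of $F$ on a fixed ball together with dominated convergence does the job, so your version can be made to work with a little more care.

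There is, however, a genuine gap in your necessity argument \ref{itm:main_lsc}$\Rightarrow$\ref{itm:main_qc}. Your test sequence
\[
v_j(x)=z+\sum_{\lambda}v\bigl(j(x-\lambda)\bigr)
\]
equals the constant $z$ on the portion of $\Omega$ not covered by the small cubes, in particular near $\partial\Omega$; hence $v_j\notin C_c^\infty(\Omega)^k$ whenever $z\neq 0$. Hypothesis~\ref{itm:main_lsc} is assumed only for compactly supported sequences, so it cannot be applied to this $v_j$, and as written your argument delivers the quasiconvexity inequality only at $z=0$. The paper does not spell out this direction either, simply citing \cite[Thm.~3.6]{FonsecaMuller1999} as classical, but note that the standard Fonseca--M\"uller construction likewise uses non-compactly-supported test sequences of the form $z+\text{oscillation}$. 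An additional idea is needed to accommodate the constant $z$ while keeping $v_j\in C_c^\infty(\Omega)^k\cap\ker A$, and your sketch does not supply one. (No such obstruction arises for \ref{itm:lsc_controllable}$\Rightarrow$\ref{itm:main_qc}, where $v_j$ is only required to lie in $C^\infty(\Omega)^k$, and there your tiling argument is fine.)
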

	The necessity part is classical and is covered, e.g., by the proof in \cite[Thm.~3.6]{FonsecaMuller1999}. The fact that compactly supported $A$-quasiconvexity implies either \ref{itm:main_lsc} or \ref{itm:lsc_controllable} follows immediately from the following:
	\begin{proposition}\label{prop:lsc}
		Let $S\in R^{k\times k'}$ be a differential operator such  that each column $(S_{\cdot i})$ is $h_i$-homogeneous and which satisfies Assumption \ref{ass}. Let $F\in C(\R^k)$ be such that
		\begin{align}\label{eq:qc_S}
			\int_QF(z+Su(x))-F(z)\dif x\geq 0\quad\text{for }z\in\R^k,\,u\in C_c^\infty(Q)^{k'},
		\end{align}
		where $Q=(0,1)^n$. We then have the lower semicontinuity implication
		$$
		Su_j\wstar Su\text{ in }L^\infty(\Omega)^k\implies\liminf_{j\rightarrow\infty}\int_{\Omega}F(Su_j(x))\dif x\geq \int_{\Omega}F(Su(x))\dif x,
		$$
		where $u_j\in C^\infty(\Omega)^{k'}$.
	\end{proposition}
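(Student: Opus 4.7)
The plan is to adapt the blow-up and truncation method of Fonseca--Müller \cite{FonsecaMuller1999}, with Assumption \ref{ass} replacing the role played there by the constant-rank condition. Write $v_j:=Su_j$ and $v:=Su$. By the uniform boundedness principle, $\|v_j\|_{L^\infty}$ is bounded, and after adding a constant to $F$ we may assume $F\geq 0$ on the (common, bounded) range of the $v_j$'s and $v$. The positive Radon measures $\mu_j:=F(v_j)\,dx$ on $\Omega$ are then uniformly bounded in total variation, and a subsequence converges weakly-$*$ to a positive Radon measure $\mu$. It suffices to prove that the Radon--Nikodym density $f:=d\mu/d\mathcal L^n$ satisfies $f(x_0)\geq F(v(x_0))$ at $\mathcal L^n$-almost every $x_0\in\Omega$, because then $\int_\Omega F(v)\,dx\leq\mu(\Omega)\leq\liminf_j\mu_j(\Omega)=\liminf_j\int_\Omega F(v_j)\,dx$.

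Fix a Lebesgue point $x_0$ of $v$ at which this derivative exists, and set $z:=v(x_0)$. For small $r>0$, define the anisotropic rescaling
$$w_j^i(y):=r^{-h_i}u_j^i(x_0+ry),\qquad y\in Q=(0,1)^n,$$
which is tailored to the $h_i$-homogeneity of the $i$-th column of $S$ so that $Sw_j(y)=v_j(x_0+ry)$; consequently $Sw_j\wstar V_r$ in $L^\infty(Q)^k$ as $j\to\infty$, with $V_r(y):=v(x_0+ry)$ converging to $z$ in $L^1(Q)^k$ as $r\downarrow 0$. Apply Assumption \ref{ass} to $w_j$ to produce $\tilde w_j,\tilde w^{(r)}$ with $S\tilde w_j=Sw_j$, $S\tilde w^{(r)}=V_r$, and $\partial^\alpha\tilde w_j^i\to\partial^\alpha\tilde w^{(r),i}$ in $L^\infty(Q)$ for every $i$ and $|\alpha|<h_i$. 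Setting $\phi_j:=\tilde w_j-\tilde w^{(r)}$, one has $S\phi_j=Sw_j-V_r$ while $\partial^\alpha\phi_j^i\to 0$ uniformly on $Q$ for $|\alpha|<h_i$.

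The core of the argument is the truncation. Fix $\delta\in(0,1/2)$ and $\chi\in C_c^\infty(Q;[0,1])$ with $\chi\equiv 1$ on $Q_\delta:=(\delta,1-\delta)^n$, and set $\psi_j^i:=\chi\phi_j^i\in C_c^\infty(Q)$. Applying the Leibniz rule column by column to $S$,
$$S\psi_j=\chi\,S\phi_j+E_j,$$
where each summand in $E_j$ contains a factor $\partial^\alpha\phi_j^i$ with $|\alpha|<h_i$; hence $E_j\to 0$ uniformly on $Q$. Compactly supported $S$-quasiconvexity at $z$, applied to $\psi_j$, yields
$$F(z)\leq\int_{Q_\delta}F\bigl(Sw_j+(z-V_r)+E_j\bigr)\,dy+M\,|Q\setminus Q_\delta|,$$
where $M$ is a uniform bound for $F$ on the relevant bounded set. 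Uniform continuity of $F$ on bounded sets lets us compare the first integral to $\int_{Q_\delta}F(Sw_j)\,dy$ up to a correction that vanishes as $j\to\infty$ (thanks to $E_j\to 0$ in $L^\infty$) and then as $r\downarrow 0$ (since $V_r\to z$ in $L^1$). Finally, changing variables $x=x_0+ry$, the resulting quantity equals $\mu_j(Q(x_0,r))/|Q(x_0,r)|$; letting $\delta\downarrow 0$, passing $j\to\infty$, and then $r\downarrow 0$ along the $\mathcal L^1$-full-measure set of radii with $\mu(\partial Q(x_0,r))=0$, produces $F(z)\leq f(x_0)$.

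The principal obstacle is the truncation step: without Assumption \ref{ass}, the Leibniz remainder $E_j$ involves lower-order derivatives of $\phi_j$ that converge only weakly, whereas the nonlinearity $F$ requires uniform smallness to absorb such corrections. Assumption \ref{ass} is engineered precisely to provide the $L^\infty$-convergence of these lower-order derivatives, making $E_j$ uniformly negligible inside $F$ and closing the gap between compactly supported $S$-quasiconvexity and lower semicontinuity in the absence of the constant-rank hypothesis.
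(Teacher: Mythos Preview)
Your proposal is essentially correct but follows a genuinely different route from the paper. The paper argues \emph{directly}: it applies Assumption~\ref{ass} once on $\Omega$ to upgrade the lower-order derivatives of $u_j-u$ to uniform convergence, covers $\Omega$ by a fine grid of small cubes $P\in\mathcal F_m$, replaces $Su$ by its cube average $z_P$ on each $P$ (controlling the resulting error via the local Lipschitz bound of Lemma~\ref{lem:lipschitz}), and then applies the quasiconvexity inequality cube by cube after a cut-off supported in $P$. Your argument is the Fonseca--M\"uller \emph{blow-up}: localize at a Lebesgue point $x_0$, rescale anisotropically to the unit cube, truncate, and apply quasiconvexity once. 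Your version sidesteps Lemma~\ref{lem:lipschitz} entirely (uniform continuity of $F$ on bounded sets suffices because the range is uniformly bounded), which is a mild simplification; on the other hand, the paper's grid argument invokes Assumption~\ref{ass} only once on the fixed domain $\Omega$, whereas you apply it to the rescaled sequence on $Q$, which strictly requires the assumption to be domain-independent (or, equivalently, one should apply it on $\Omega$ first and then rescale, noting that the lower-order convergence is inherited under the anisotropic rescaling).

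One slip to correct: your stated order ``$\delta\downarrow 0$, then $j\to\infty$, then $r\downarrow 0$'' cannot work as written, because the Leibniz remainder $E_j$ contains factors $\partial^\beta\chi_\delta$ that blow up as $\delta\to 0$; you only know $E_j\to 0$ in $L^\infty$ for \emph{fixed} $\delta$ as $j\to\infty$. The right order is $j\to\infty$ first (for fixed $r,\delta$), after which the $\delta$-dependent contribution from $E_j$ has vanished and the residual bound $\int_Q\omega(|z-V_r|)\,dy+M\,|Q\setminus Q_\delta|$ can be sent to zero by taking $\delta\downarrow 0$ and $r\downarrow 0$ in either order. With this adjustment the argument is complete.
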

	\begin{proof}[Proof of Theorem~\ref{thm:lsc_main}]
		We already argued that we need only prove that the quasiconvexity assumption implies the lower semicontinuity statements. By \Cref{thm:exact_Ccinfty}, we have that compactly supported $A$-quasiconvexity implies \ref{itm:main_qc}. By \Cref{thm:exact_Ccinfty}, resp. by controllability, we have that the competitor maps in both \ref{itm:main_lsc}, resp. \ref{itm:lsc_controllable} are of the form $v_j=Su_j$. The result then follows from Proposition~\ref{prop:lsc}.
	\end{proof}
	It remains to prove Proposition~\ref{prop:lsc}. This will require a few technical preliminaries. We will write
	$$
	S(\xi)y=\sum_{l=1}^{k'} S^l(\xi)y^l\quad\text{for }\xi\in\R^n,\,y\in\R^{k'},
	$$
	where each $S^l\in R^{k}$ is a $h_l$-homogeneous differential operator acting on scalar fields. We define its \emph{image wave cone} by
	$$
	\mathcal  W^S\coloneqq \bigcup_{\xi\in\R^n}\im S(\xi).
	$$
	\begin{lemma}\label{lem:range_S}
		We have that $\{Su(0)\colon u\in C^\infty(\R^n)^{k'}\}=\mathrm{span\,}\mathcal W^S$.
	\end{lemma}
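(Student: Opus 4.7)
The map $u \mapsto Su(0)$ from $C^\infty(\R^n)^{k'}$ to $\R^k$ is linear, so its image is a linear subspace of $\R^k$. Hence it suffices to prove the two inclusions
\[
\mathcal W^S \subseteq \{Su(0) : u \in C^\infty(\R^n)^{k'}\} \subseteq \mathrm{span}\,\mathcal W^S.
\]
The first inclusion will be shown by an explicit ansatz using powers of linear forms; the second will be shown by reducing to the Taylor polynomial of order $h_l$ of the $l$-th component $u^l$, and then expanding this homogeneous polynomial in the basis of $h_l$-th powers of linear forms. The single algebraic ingredient is the classical fact that $\{(\xi\cdot x)^d : \xi \in \R^n\}$ spans the space of real homogeneous polynomials of degree $d$ in $x$ — equivalently, the map $\xi \mapsto (\xi^\alpha)_{|\alpha|=d}$ has image spanning $\R^{\binom{n+d-1}{d}}$, which follows because a polynomial in $\xi$ vanishing identically on $\R^n$ has all zero coefficients.

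\emph{First inclusion.} Fix $\xi \in \R^n$ and $y \in \R^{k'}$; I want to realize $S(\xi)y \in \R^k$ as $Su(0)$ for some $u$. Set
\[
u^l(x) := \frac{(\xi \cdot x)^{h_l}}{h_l!}\,y^l, \qquad l=1,\ldots,k'.
\]
Since $S^l$ is $h_l$-homogeneous, $S^l(\partial)$ is a differential operator of pure order $h_l$, and a direct computation (expanding $S^l(\partial) = \sum_{|\alpha|=h_l} c^l_\alpha\,\partial^\alpha$ and $(\xi\cdot x)^{h_l}/h_l! = \sum_{|\alpha|=h_l} \xi^\alpha x^\alpha/\alpha!$) gives $S^l(\partial)u^l(0) = S^l(\xi)y^l$. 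Summing over $l$ yields $Su(0) = S(\xi)y$.

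\emph{Second inclusion.} Fix $u \in C^\infty(\R^n)^{k'}$. For each $l$, since $S^l(\partial)$ has pure order $h_l$, the value $S^l(\partial)u^l(0)$ depends only on the degree-$h_l$ homogeneous part $p_l$ of the Taylor expansion of $u^l$ at the origin. By the spanning lemma above, write
\[
p_l(x) = \sum_{j} c^l_j \,\frac{(\xi^l_j \cdot x)^{h_l}}{h_l!}
\]
for finitely many real scalars $c^l_j$ and real covectors $\xi^l_j \in \R^n$. By the first inclusion applied componentwise, $S^l(\partial)p_l(0) = \sum_j c^l_j\,S^l(\xi^l_j)$. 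Summing over $l$ gives $Su(0) = \sum_{l,j} c^l_j\,S^l(\xi^l_j)$; since $S^l(\xi^l_j) = S(\xi^l_j)e_l \in \im S(\xi^l_j) \subseteq \mathcal W^S$, we conclude $Su(0) \in \mathrm{span}\,\mathcal W^S$.

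\emph{Main obstacle.} There is really no serious obstacle: the only nontrivial input is the spanning statement for powers of linear forms (a Waring-type fact), and everything else is bookkeeping with Taylor expansions and the homogeneity of the columns of $S$. The mild subtlety worth flagging is that one must work column by column (using that each column $S^l$ has its own homogeneity degree $h_l$, rather than a uniform row degree) so that the reduction to the degree-$h_l$ Taylor part is valid.
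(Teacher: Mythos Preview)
Your proof is correct and takes a genuinely different route from the paper's. For the inclusion $\{Su(0)\}\subseteq\mathrm{span}\,\mathcal W^S$, the paper reduces to compactly supported $u$, applies the Fourier transform, and writes
\[
Su(0)=\int_{\R^n} S(\xi)\,y(\xi)\,\dif\xi,\qquad y^l(\xi)=\Re\bigl[(-\imag)^{h_l}\hat u^l(\xi)\bigr],
\]
so that the integral lies in the (finite-dimensional, hence closed) span of $\mathcal W^S$. For the reverse inclusion the paper uses the exponential ansatz $u(x)=\exp(x\cdot\xi)\,y$. Your argument is entirely algebraic: you observe that $Su(0)$ depends only on the degree-$h_l$ homogeneous Taylor part of each component $u^l$, and then invoke the Waring-type fact that powers of real linear forms span the homogeneous polynomials of a given degree. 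This avoids Fourier analysis altogether and makes the finite-dimensional nature of the problem explicit, at the cost of importing the spanning lemma for powers of linear forms; the paper's approach needs no such algebraic input but relies on the Fourier machinery already ambient in the analytic section.
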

	We will thus regard the set above as the \emph{essential range} of $S$.
	\begin{proof}
		We can equivalently prove that $\{Su(0)\colon u\in C_c^\infty(\R^n)^{k'}\}=\mathrm{span\,}\mathcal W^S$. In that case, $\widehat{Su}$ is a Schwartz function, where $\hat{\,\cdot\,}$ denotes the Fourier transform,  so 
		$$
		Su(0)=\int_{\R^n}\widehat{Su}(\xi)\dif \xi=\int_{\R^n} \sum_{l=1}^{k'}(-\imag)^{h_l}S^l(\xi)\hat u^l(\xi)\dif \xi=\int_{\R^n} \sum_{l=1}^{k'}S^l(\xi)\Re[(-\imag)^{h_l}\hat u^l(\xi)]\dif \xi.
		$$
		By setting $y^l(\xi)\coloneqq \Re[(-\imag)^{h_l}\hat u^l(\xi)]$, we have that 
		$$
		Su(0)=\int_{\R^n} S(\xi) y(\xi)\dif\xi,
		$$
		which clearly implies $Su(0)\in\mathrm{span\,}\mathcal W^S$.
		
		Conversely, let $\xi\in\R^n$, $y\in \R^{k'}$ and look at $S(\xi)y$. Define the function
		$$
		u(x)\coloneqq\exp(x\cdot\xi) y,
		$$
		so that $Su(x)=\sum_{l=1}^{k'}\exp(x\cdot\xi)S^l(\xi)y^l=\exp(x\cdot\xi)S(\xi)y$. It follows that $Su(0)=S(\xi)y$, which shows that $\mathcal W^S\subset\{Su(0)\colon u\in C^\infty(\R^n)^{k'}\}$. Since the right hand side is a linear space, the conclusion follows.
	\end{proof}
	\begin{lemma}\label{lem:lipschitz}
		If \eqref{eq:qc_S} holds, then $F$ is convex in the directions of $\mathcal W^S$. In particular, $F|_{\mathrm{span\,}\mathcal W^S}$ is locally Lipschitz.
	\end{lemma}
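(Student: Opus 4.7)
The plan is to establish directional convexity of $F$ along each $w\in\mathcal W^S$ by feeding into~\eqref{eq:qc_S} an oscillating, compactly supported test field whose image under $S$ approximates a two-valued periodic function pointing in the direction $w$. Fix $w=S(\xi)y\in\mathcal W^S$, $\lambda\in(0,1)$, $T\in\R$ and $z\in\R^k$; the target inequality
$$F(z)\le \lambda F\bigl(z+T(1-\lambda)w\bigr)+(1-\lambda)F\bigl(z-T\lambda w\bigr)$$
is exactly convexity of $t\mapsto F(z+tw)$. Since any $w\in\mathcal W^S$ lies in $\mathrm{im}\,S(\xi)$ for some $\xi\ne 0$ (constant columns of $S$ are in $\mathrm{im}\,S(\xi)$ for every $\xi$, and higher-order columns vanish at $\xi=0$), I may assume $\xi\ne 0$.

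The construction: pick a smooth $1$-periodic $\chi_\delta\colon\R\to\R$ of mean zero that approximates in $L^1$ and a.e.\ the step $\chi_0\coloneqq(1-\lambda)\mathbf 1_{(0,\lambda)}-\lambda\mathbf 1_{(\lambda,1)}$. Since $\chi_\delta$ has mean zero on its period it admits a smooth $1$-periodic antiderivative of mean zero; iterating produces smooth $1$-periodic $\Phi_l$ with $\Phi_l^{(h_l)}=\chi_\delta$ for each $l=1,\dots,k'$. Fix a cutoff $\psi\in C_c^\infty(Q)$ with $0\le\psi\le 1$ and set
$$u_\eps^l(x)\coloneqq T\,\eps^{h_l}\,y^l\,\psi(x)\,\Phi_l(\xi\cdot x/\eps),\qquad l=1,\dots,k'.$$
A direct Leibniz computation, combined with the chain-rule identity $S^l(\d)[\Phi_l(\xi\cdot x/\eps)]=\eps^{-h_l}S^l(\xi)\,\chi_\delta(\xi\cdot x/\eps)$ coming from homogeneity of $S^l$ of order $h_l$, gives
$$Su_\eps(x)=Tw\,\psi(x)\,\chi_\delta(\xi\cdot x/\eps)+r_\eps(x),\qquad \|r_\eps\|_{L^\infty(Q)}=O(\eps),$$
because every Leibniz error term pairs at least one derivative of $\psi$ with a strictly lower-order derivative of $\Phi_l(\xi\cdot x/\eps)$, which costs only $\eps^{-j}$ with $j<h_l$ and leaves a net positive power of $\eps$. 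In particular $u_\eps\in C_c^\infty(Q)^{k'}$ and $Su_\eps$ is uniformly bounded in $L^\infty(Q)$.

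Inserting $u_\eps$ into~\eqref{eq:qc_S} gives $F(z)\le\int_Q F(z+Su_\eps)\,dx$. Applying the standard periodic-homogenization/Riemann-Lebesgue principle to the continuous function $(x,t)\mapsto F(z+Tw\psi(x)t)$ (using $\xi\ne 0$ and uniform boundedness of $Su_\eps$) allows passage to the limit $\eps\to 0$, yielding
$$F(z)\le \int_Q\!\int_0^1 F\bigl(z+Tw\psi(x)\chi_\delta(s)\bigr)\,ds\,dx.$$
Letting $\psi\nearrow\mathbf 1_Q$ (dominated convergence, using continuity of $F$ and uniform bounds) and then $\chi_\delta\to\chi_0$ a.e.\ collapses the right-hand side to $\lambda F(z+T(1-\lambda)w)+(1-\lambda)F(z-T\lambda w)$, giving convexity of $F$ along every $w\in\mathcal W^S$.

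For the local Lipschitz conclusion, select a basis $w_1,\dots,w_m$ of $\mathrm{span}\,\mathcal W^S$ drawn from $\mathcal W^S$ itself. After the linear change of coordinates sending $w_i$ to the $i$-th standard basis vector of $\R^m$, the restriction $F|_{\mathrm{span}\,\mathcal W^S}$ becomes separately convex and is continuous by assumption; the classical theorem on separately convex continuous functions (local one-dimensional Lipschitz estimates iterated over coordinates, using local boundedness coming from continuity) then delivers local Lipschitz continuity. The main technical obstacle is the bookkeeping of the Leibniz remainder $r_\eps$: the several column homogeneities $h_l$ force the component-wise scaling $\eps^{h_l}$ in each $u_\eps^l$, and one must check that inserting a single cutoff $\psi$ common to all components still produces a uniformly vanishing error in $L^\infty$, despite the differing orders.
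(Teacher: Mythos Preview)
Your argument is correct and is precisely the standard oscillating-test-function route that the paper defers to by citing \cite{KirchheimKristensen2016}: build column-wise rescaled potentials so that $Su_\eps$ laminates in the direction $w=S(\xi)y$, pass to the Young-measure/Riemann--Lebesgue limit to obtain directional convexity, and then invoke the elementary ``separately convex $+$ locally bounded $\Rightarrow$ locally Lipschitz'' lemma after choosing a basis of $\mathrm{span}\,\mathcal W^S$ inside $\mathcal W^S$. The paper gives no details beyond these references, so your write-up is a faithful expansion of the intended proof rather than a different approach.
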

	\begin{proof}
		The proof is standard. For the directional convexity, one employs a modification of the proof of \cite[Thm.~4.2]{KirchheimKristensen2016}. The prof of the Lipschitz regularity is elementary, see \cite[Lem.~2.3]{KirchheimKristensen2016}.
	\end{proof}
	\begin{lemma}\label{lem:indep_qc}
		If \eqref{eq:qc_S} holds for $Q=(0,1)^n$, then it holds for any cube $P\subset\R^n$.
	\end{lemma}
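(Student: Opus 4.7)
The plan is a rescaling argument that exploits the column-wise homogeneity of the syzygy matrix $S$. First I would write an arbitrary (axis-aligned) cube in the form $P = x_0 + rQ$ for a corner $x_0\in\R^n$ and side length $r>0$. Given a test function $u\in C_c^\infty(P)^{k'}$, I would lift it to a test function on $Q$ by setting
\[
\tilde u^l(y) \coloneqq r^{-h_l}\, u^l(x_0 + ry), \qquad l = 1,\dotsc,k',
\]
weighting each component by a power of $r$ chosen to match the homogeneity degree of the corresponding column $S^l$. Since $\supp\tilde u = r^{-1}(\supp u - x_0) \subset Q$ is compact, this gives $\tilde u\in C_c^\infty(Q)^{k'}$.

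The key step is the commutation identity $S\tilde u(y) = (Su)(x_0 + ry)$ for every $y\in Q$. This will follow because every entry of the $l$-th column $S^l$ is homogeneous of degree exactly $h_l$ in $\partial$: each monomial in $S^l(\partial)\tilde u^l$ produces via the chain rule a factor $r^{h_l}$ that exactly cancels the prefactor $r^{-h_l}$ in $\tilde u^l$. Once this identity is in hand, I would apply the hypothesis \eqref{eq:qc_S} on $Q$ to $\tilde u$ at the base point $z$ and then perform the change of variables $x = x_0 + ry$ (whose Jacobian $r^n$ is strictly positive) to obtain
\[
0 \leq r^{-n}\int_P F(z + Su(x)) - F(z) \dif x,
\]
which, after multiplying by $r^n$, is precisely \eqref{eq:qc_S} on $P$.

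No substantial obstacle is expected: the proof is essentially a one-line scaling lemma, and the column homogeneity of $S$ (inherited from Assumption~\ref{ass:hom} applied to $A$) is exactly what makes the rescaling consistent across columns of differing degrees. For arbitrarily rotated cubes the argument would fail since $S(\partial)$ is not rotation-invariant in general, but axis-aligned cubes suffice both for the use of this lemma in the subsequent proof of Proposition~\ref{prop:lsc} and for the standard conventions of $A$-quasiconvexity theory.
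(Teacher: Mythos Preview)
Your proposal is correct and follows essentially the same rescaling argument as the paper, exploiting the column homogeneity of $S$ so that the affine change of variables commutes with the operator. The only cosmetic difference is that the paper shrinks $P$ into a proper subset $x_0+\varepsilon P\subset Q$ and splits the integral over the complement (where the test function vanishes), whereas you biject $Q$ onto $P$ directly; your version is slightly cleaner and avoids that extra bookkeeping.
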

	\begin{proof}
		Let $w\in C_c^\infty(P)^{k'}$. There exists a translation and dilation such that $x_0+\varepsilon P\subset Q$. In this case, we consider the rescaling
		$$
		u^l(x)=\varepsilon^{h_l} w^l\left(\frac{x-x_0}{\varepsilon}\right)\in C_c^\infty(Q),\quad l=1,\ldots k'.
		$$
		We then have that \eqref{eq:qc_S} implies
		\begin{align*}
			F(z)&\leq \int_Q F(z+Su(x))\dif x=|Q\setminus (x_0+\varepsilon P)|F(z)+ \int_{x_0+\varepsilon P} F(z+S u(x)) \dif x\\
			&=(1-\varepsilon^n|P|)F(z)+\int_P F(z+w(y))\varepsilon^n\dif y,
		\end{align*}
		where we made the change of variable $x=x_0+\varepsilon y$. Rearranging proves the claim.
	\end{proof}
	\begin{remark}
		We will see that for the conclusion of Proposition~\ref{prop:lsc} to hold, it suffices to require that $z\in \mathrm{span\,}\mathcal W^S$ in \eqref{eq:qc_S}.
	\end{remark}
	\begin{proof}[Proof of Proposition~\ref{prop:lsc}]
		By Assumption \ref{ass}, we can assume that $\partial^\alpha  u_j^i\rightarrow \partial^\alpha  u^i$ in $L^\infty(\Omega)$ as $j\rightarrow\infty$ for each  $i=1,\ldots, k'$ and $|\alpha|<h_i$ (recall that $y^i$ denotes the $i$th entry of $y\in\R^{k'}$). This is because replacing $u_j$ with $\tilde u_j$ does not change the implication we want to prove. Let $\varepsilon>0$.
		
		We consider a regular grid of cubes of side length $m^{-1}$, $m\rightarrow\infty$ and consider the family $\mathcal F_m$ of those cubes that fit inside $\Omega$. For large enough $m$, we obtain
		$$
		|\Omega\setminus \cup\mathcal F_m|\leq \varepsilon,
		$$
		where the absolute value sign denotes Lebesgue measure on $\R^n$. Making $m$ larger if necessary, we can also assume that
		$$
		\sum_{P\in\mathcal F_m}\int_P |Su(x)-z_P|\dif x\leq \varepsilon,\quad\text{where }z_P\coloneqq\frac{1}{|P|}\int_P Su(x)\dif x.
		$$
		This is so since $Su\in L^\infty(\Omega)^{k}$ is approximated a.e. by $v_m\coloneqq \sum_{P\in\mathcal F_m} z_P\mathbf{1}_{P} $ as $m\rightarrow\infty$. 
		
		We can estimate:
		\begin{align*}
			\int_\Omega F(Su_j)-F(Su)&=\int_{\Omega\setminus\cup\mathcal F_m} F(Su_j)-F(Su) + \sum_{P\in\mathcal F_m} \int_{P}F(Su_j)-F(z_P+Su_j-Su)+\\
			&+\sum_{P\in\mathcal F_m} \int_{P}F(z_P+Su_j-Su)-F(z_P)+\sum_{P\in\mathcal F_m} \int_{P}F(z_P)-F(Su)\\
			&\eqqcolon \mathbf{I}+\mathbf{II}+\mathbf{III}+\mathbf{IV}.
		\end{align*}
		To estimate \textbf{I}, we note that the functions $Su_j$ are uniformly bounded, so we can write $|Su_j|,\,|Su|\leq c_1$ almost everywhere. We then have
		$$
		|\mathbf{I}|\leq 2\max |F(\bar B(0,c_1))||\Omega\setminus \cup\mathcal F_m|\leq c_2\varepsilon,
		$$
		where $\bar B(z,r)$ denotes the closed ball in $\R^k$, centered at $z$, of radius $r>0$.
		
		The next simplest term to estimate is \textbf{IV}, for which we argue as follows: Since $v_m \rightarrow Su $ a.e. as $m\rightarrow\infty$, we also have that $F(v_m)-F(Su)\rightarrow 0$ as $m\rightarrow\infty$. Moreover, $|F(v_m)-F(Su)|\leq c_2$ a.e., so we can infer from the dominated convergence theorem that $|F(v_m)-F(Su)|\rightarrow 0$ strongly in $L^1(\Omega)$. Therefore, we can choose $m$ large enough so that
		$
		|\mathbf{IV}|\leq \varepsilon
		$.
		
		The estimation of \textbf{II} requires Lemma~\ref{lem:lipschitz}, namely that $F$ is locally Lipschitz on the essential range of $S$. We note that $|Su_j|,\,|z_P+Su_j-Su|\leq 3c_1$, so we can consider the Lipschitz constant $L$ of $F$ on $B(0,3c_1)$. We have that
		$$
		|\mathbf{II}|\leq \sum_{P\in\mathcal F_m} \int_{P}L|z_P-Su|\leq L\int_\Omega |v_m-Su|\rightarrow0 \text{ as }m\rightarrow\infty.
		$$
		We can therefore also make $|\mathbf{II}|\leq \varepsilon$ by making $m$ possibly larger.
		
		The three terms we covered so far were estimated in size. It remains to deal with \textbf{III}, which will only be estimated from below. We write $w_j\coloneqq u_j-u$, so that $Sw_j\wstar 0$ and $\partial^\alpha w_j\rightarrow 0$ in $L^\infty$ for $|\alpha|<h_i$, $i=1,\ldots,k'$. The parameter $m$ will be fixed for the remainder of the proof (we chose it such that $|\mathbf{I}|+|\mathbf{II}|+|\mathbf{IV}|\leq c_3\varepsilon$). We focus on one small cube $P\in\mathcal F_m$ and obtain an estimate independent of $m$. 
		
		Write $P_\delta$ for the cube of side length $(1-\delta)m^{-1}$ which has the same center as $P$. We impose $1-\varepsilon\leq (1-\delta)^n$, so that $|P\setminus P_\delta|\leq \varepsilon|P|$. We can construct a cut-off function $\eta_\delta \in C_c^\infty(\R^n)$ with $0\leq \eta_\delta\leq 1$, such that $\eta_\delta =0$ outside $P$, $\eta_\delta =1$ in $P_\delta$ and $|\partial^\alpha\eta_\delta|\leq c_4\delta^{-|\alpha|}$, where $|\alpha|$ is at most equal to the degree of $S$. We define
		$$
		w_{j,\delta}=\eta_\delta w_j\in C_c^\infty(P)^{k'}.
		$$
		We will estimate $Sw_{j,\delta}$ using the product rule. In particular, we want a bound for large $j$ that is independent of $\delta$, for
		$$
		Sw_{j,\delta}=\eta_\delta Sw_j+\sum_{i=1}^{k'}\sum_{|\beta_i|=h_i}\sum_{\alpha<\beta_i} c_{\alpha,i}\partial^\alpha w_j^i\partial^{\beta_i-\alpha}\eta_\delta.
		$$
		Here we use the uniform convergence of the derivatives of $w_j$, regarding $\delta$ as fixed. We can obtain $|Sw_{j,\delta}|\leq c_5$ independently of $\delta$ for large $j$. Using the local boundedness of $F$, we obtain
		\begin{align*}
			\int_P|F(z_P+Sw_j)-F(z_p+Sw_{j,\delta})|&=\int_{P\setminus P_\delta}|F(z_P+Sw_j)-F(z_p+Sw_{j,\delta})|\\
			&\leq |P\setminus P_\delta|c_6\leq c_6 \varepsilon|P|.
		\end{align*}
		We thus obtain that 
		$$
		\liminf_{j\rightarrow\infty}\mathbf{III}\geq \sum_{P\in\mathcal F_m}-c_6\varepsilon|P|+\int_P F(z_P+w_{j,\delta})-F(z_P) \geq -c_6\varepsilon|\Omega|,
		$$
		where we use the quasiconvexity inequality \eqref{eq:qc_S} in $P$. cf. Lemma~\ref{lem:indep_qc}. Collecting all estimates, we have
		$$
		\liminf_{j\rightarrow\infty} \int_{\Omega}F(Su_j)-F(Su)\geq -(c_3+c_6|\Omega|)\varepsilon.
		$$
		Since this holds for any $\varepsilon>0$, the proof is complete.
	\end{proof}
	
	\newpage

\end{document}